\newtheorem{thm}{Theorem}[section]
\newtheorem{theoremA}{Theorem}
\newtheorem{conjectureA}{Conjecture}
\newtheorem{lem}{Lemma}[section]
\newtheorem{false statement}{False statement}
\newtheorem{cor}{Corollary}
\theoremstyle{definition}
\newtheorem{claim}{Claim}
\newtheorem{case}{Case}
\newtheorem{subcase}{Case}[case]
\newtheorem{subsubcase}{Case}[subcase]
\def\qed{\hfill \rule{4pt}{7pt}}
\newcounter{mathitem}
  {\begin{list}{{$(\roman{mathitem})$}}{
   \setcounter{mathitem}{0}
   \usecounter{mathitem}
   \setlength{\topsep}{0pt plus 2pt minus 0pt}
   \setlength{\parskip}{0pt plus 2pt minus 0pt}
   \setlength{\partopsep}{0pt plus 2pt minus 0pt}
   \setlength{\parsep}{0pt plus 2pt minus 0pt}
   \setlength{\leftmargin}{35pt}
   \setlength{\itemsep}{0pt plus 2pt minus 0pt}}}
  {\end{list}}
\begin{document}

\title{\bf Exact bipartite Tur\'an numbers of large even cycles}

\date{}

\author{Binlong Li\thanks{Department of Applied Mathematics,
Northwestern Polytechnical University, Xi'an, Shaanxi 710072,
P.R.~China. Email: binlongli@nwpu.edu.cn. Partially supported
by the NSFC grant (No.\ 11601429).}~~~ Bo
Ning\thanks{Corresponding author. Center for Applied
Mathematics, Tianjin University, Tianjin 300072, P.R.
China. Email: bo.ning@tju.edu.cn. Partially supported
by the NSFC grant (No.\ 11601379) and
the Seed Foundation of Tianjin University (2018XRG-0025).}}
\maketitle

\begin{center}
\begin{minipage}{140mm}
\small\noindent{\bf Abstract:} Let the bipartite Tur\'an number
$ex(m,n,H)$ of a graph $H$ be the maximum number of edges in an
$H$-free bipartite graph with two parts of sizes $m$ and $n$,
respectively. In this paper, we prove that $ex(m,n,C_{2t})=(t-1)n+m-t+1$ 
for any positive integers $m,n,t$ with $n\geq m\geq t\geq \frac{m}{2}+1$. 
This confirms the rest of a conjecture of Gy\"{o}ri \cite{G97} (in a
stronger form), and improves the upper bound of $ex(m,n,C_{2t})$
obtained by Jiang and Ma \cite{JM18} for this range. We also prove a
tight edge condition for consecutive even cycles in bipartite
graphs, which settles a conjecture in \cite{A09}. As a main tool,
for a longest cycle $C$ in a bipartite graph, we obtain an estimate
on the upper bound of the number of edges which are incident to at
most one vertex in $C$. Our two results generalize or sharpen a
classical theorem due to Jackson \cite{J85} in different ways.

\smallskip
\noindent{\bf Keywords:} bipartite Tur\'an number; even cycle;
bipartite graph; Gy\"{o}ri's conjecture
\end{minipage}
\end{center}

\section{Introduction}
We only consider simple graphs, which are undirected and finite.
The study of cycles in bipartite graphs has a rich history.
There are many results in the literature which use that a bipartite
graph with high degree has a long cycle, see references \cite{MM63,BC76,J81,J85,JL94}.
Moreover, \cite{KV05,LK18} reveal that results on cycles in bipartite
graphs play important roles in investigating
cycles in hypergraphs. For a lot of references from the view of
extremal graph theory, we refer to the survey \cite{FS13}.

Maybe one of the best known extremal results involving long cycles
in bipartite graphs is the following proved more than
30 years old.
\begin{thm}[Jackson \cite{J85}]\label{Thm-Jackson}
Let $t$ be an integer and $G=(X,Y;E)$ be a bipartite graph. Suppose
that $|X|=n$, $|Y|=m$, where $n\geq m\geq t\geq 2$.
Suppose that
$$e(G)>\left\{\begin{array}{ll}
  (n-1)(t-1)+m,    & if~m\leq 2t-2;\\
  (m+n-2t+3)(t-1),  & if~m\geq 2t-2.
\end{array}\right.$$
Then $G$ contains a cycle of length at least $2t$.
\end{thm}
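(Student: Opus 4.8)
The plan is to prove the contrapositive: assuming $G$ has no cycle of length at least $2t$, I will bound $e(G)$ by the displayed piecewise expression. A first reduction is to the $2$-connected case. Decomposing $G$ into blocks $B_1,\dots,B_r$, a longest cycle lies entirely inside one block, each block is again free of cycles of length $\ge 2t$, and $e(G)=\sum_i e(B_i)$ while $\sum_i(|V(B_i)|-1)=|V(G)|-1$. A routine bookkeeping argument then shows that an upper bound proved for $2$-connected bipartite graphs propagates to the general bound, the worst case being a single nontrivial block together with a forest of pendant edges (which is exactly the sparse extremal configuration $K_{t-1,n}$ plus a matching). So I would assume $G$ is $2$-connected and let $C$ be a longest cycle. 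Since $G$ is bipartite, $|C|=2s$ with $s\le t-1$, and $C$ meets each side in exactly $s$ vertices; write $X_C,Y_C$ for these and $X'=X\setminus X_C$, $Y'=Y\setminus Y_C$, so $|X'|=n-s$ and $|Y'|=m-s$.

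Next I would split the edges into those with both ends on $C$ and those incident to at most one vertex of $C$. The first type spans a bipartite graph on parts of size $s$, hence is bounded by $s^2$ (attained by $K_{s,s}$, which is consistent with $C$ being a longest cycle). Thus the entire difficulty is concentrated in estimating the number of edges incident to at most one vertex of $C$, giving a bound of the shape
$$e(G)\ \le\ s^{2}+g(s),$$
where $g(s)$ is an upper bound for the edges meeting $C$ in at most one vertex. This is precisely the ``key tool'' the introduction advertises, and I expect it to be the main obstacle.

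To bound $g(s)$ I would use a cycle-extension (P\'osa-type rotation) argument controlled by the maximality of $C$. The guiding principle is that if a vertex $v\notin C$, or more generally a whole component of $G-V(C)$, attaches to $C$ at vertices lying too close together along $C$, then one can reroute $C$ through the outside to produce a strictly longer cycle, contradicting maximality; and if the attachments are forced to be spread out, their number is capped in terms of $s$. Concretely I would fix an orientation of $C$ and, for each external edge $uv$ with $u\in V(C)$, charge it to the successor $u^{+}$ of $u$ on $C$; the no-longer-cycle condition then forbids certain coincidences among these charges (for instance, a single external vertex cannot be adjacent to two vertices of $C$ at cyclic distance two without freeing an interior vertex that could be reabsorbed to lengthen $C$), which limits the total count. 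The genuinely delicate point, and where the real work lies, is handling components of $G-V(C)$ that are larger than a single vertex together with the edges lying entirely outside $C$: one must verify that the rerouting actually lengthens $C$ rather than merely exchanging one vertex for another, and it is here that the precise coefficients in $g(s)$ are pinned down.

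Finally I would combine the two estimates and bound $s^{2}+g(s)$ over the admissible range $s\le t-1$. The resulting estimation naturally splits according to how the external edges distribute (equivalently, according to the size of the densest core relative to $t$), producing two candidate values for the bound: one of the form $(n-1)(t-1)+m$ and one of the form $(m+n-2t+3)(t-1)$. Since the difference of these two expressions is $(t-2)(m-2t+2)$, which is nonpositive exactly when $m\le 2t-2$ and nonnegative exactly when $m\ge 2t-2$, the governing bound is the first expression in the range $m\le 2t-2$ and the second in the range $m\ge 2t-2$, reproducing the displayed statement (the second bound need not be tight away from the threshold $m=2t-2$). Re-inserting the block decomposition from the first step then yields the theorem for all of $G$. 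I expect essentially all of the difficulty, and all of the case analysis that produces the threshold $m=2t-2$, to reside in the structural estimate of the third paragraph.
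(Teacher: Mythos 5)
There is no ``paper's own proof'' to compare against here: Theorem~\ref{Thm-Jackson} is Jackson's theorem, quoted from \cite{J85} and used as a black box throughout the paper. Judged on its own terms, your outline has the right architecture --- indeed it is exactly the architecture of the paper's Theorem~\ref{Thm-BipartiteBondy} (Theorem $1.7'$): for a longest cycle $C$ of length $2s$, write $e(G)\le e(G[C])+\rho(G-C)\le s^2+g(s)$ and optimize over $s\le t-1$. But your proposal has a genuine gap precisely where you concede ``the real work lies'': the estimate $g(s)$ is never established, and nothing in your third paragraph substitutes for it. A successor-charging/rotation argument of the kind you sketch controls the attachments of a \emph{single} external vertex, but the coefficients in $g(s)$ are determined by whole components of $G-V(C)$ together with the edges inside them, and there the assertion that attachments are ``forced to be spread out'' is the conclusion one needs, not an argument. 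In the paper, pinning down this bound consumes essentially all of Section~2: it requires purpose-built lemmas on maximal paths with terminus in a prescribed side (Lemma~\ref{Lem-specivertexpath}), on detached maximal disjoint path pairs and good pairs (Lemmas~\ref{Lem-goodmaximalpair} and~\ref{LeGoodPair}), Jackson's Lemmas~\ref{ThJa}, \ref{Lem-Jacksonmaximalpath} and~\ref{Lem_Cyclecontaining}, a fan lemma, and a case analysis over whether $G-C$ is connected, whether its components are balanced, and whether they are $2$-connected. Your sketch cannot be completed without reconstructing work of this magnitude.

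Two further cautions. First, you cannot close the gap by invoking Theorem~\ref{Thm-BipartiteBondy} itself: the paper's proof of it uses Theorem~\ref{Thm-Jackson} (both to certify the extremal graphs $L^{c}_{a,b}$ and to bound $e(B_3)$ inside Claim~\ref{Claim2-connected}), so within this paper that route is circular; an independent argument --- Jackson's original one --- is unavoidable. Second, your reduction to the $2$-connected case is not ``routine bookkeeping'': the target bound is piecewise in $\min\{m,2t-2\}$, different blocks can fall into different regimes, and the superadditivity check across a block decomposition is a real computation (the paper's analogous reduction, Claim~\ref{Claim2-connected}, runs to two pages). Your closing optimization is correct as far as it goes --- including the sign computation $(t-2)(m-2t+2)$ for the difference of the two bounds --- but it too needs the cross-regime cases (e.g.\ $m\le 2t-2$ while $m\ge 2s$ for the particular longest cycle), which you pass over.
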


One question naturally arises: Can we find
exact edge number conditions for cycles of given lengths? As we
shall see later, we indeed have the following significant
strengthening of Jackson's theorem.

\begin{thm}\label{Thm-SharpenJackson}
Let $t$ be an integer and $G=(X,Y;E)$ be a bipartite graph
with $|X|=m$, $|Y|=n$. Suppose that $n\geq m$ and
$t\leq m\leq 2t-2$. If $e(G)>(t-1)(n-1)+m$, then $G$ contains a
cycle of length $2t$.
\end{thm}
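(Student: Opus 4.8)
The plan is to argue by contradiction. Suppose $G$ satisfies $e(G)>(t-1)(n-1)+m$ yet contains no cycle of length exactly $2t$. Because we are in the range $t\le m\le 2t-2$, the relevant branch of Jackson's theorem (Theorem~\ref{Thm-Jackson}) is the first one, with threshold $(n-1)(t-1)+m$; since $e(G)$ exceeds this, $G$ has a cycle of length at least $2t$. Fix a \emph{longest} cycle $C$, say of length $2\ell$. As $G$ is bipartite, $C$ meets $X$ in exactly $\ell$ vertices and $Y$ in exactly $\ell$ vertices, so $\ell\le m\le 2t-2$; and since $G$ has no $C_{2t}$, the cycle $C$ is not itself a $2t$-cycle, forcing $\ell\ge t+1$. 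Thus everything reduces to the single configuration $t+1\le \ell\le m\le 2t-2$, and the goal becomes to show that in this situation $e(G)\le (t-1)(n-1)+m$, contradicting the hypothesis.

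Write $R=V(G)\setminus V(C)$ and split $E(G)$ into the edges lying inside $V(C)$ and the edges incident to at most one vertex of $C$ (equivalently, the edges meeting $R$). For the second class I would invoke the estimate advertised in the abstract as the main tool: using the maximality of $C$ together with rotation/reassembly arguments — no vertex of $R$, and no segment of $R$, can be attached to two cycle vertices in a way that would let one reroute $C$ into a longer cycle — one caps the contribution of the external vertices and proves a bound of the shape (number of edges meeting $R$)$\,\le (t-1)(n-\ell)+(m-\ell)$. Intuitively this says the part of $G$ touching $R$ cannot do better than the extremal ``$K_{t-1,\cdot}$ plus pendant edges'' configuration.

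The crux is to bound the first class, the edges inside $V(C)$, which is the number of cycle edges plus chords of $C$. Here the induced graph $G[V(C)]$ is a Hamiltonian bipartite graph on two parts of size $\ell$ that avoids $C_{2t}$ with $t<\ell$, and the point is that this severely restricts the chords. A single chord joining two cycle vertices at arc-distance $2s-1$ splits $C$ into cycles of lengths $2s$ and $2\ell-2s+2$, so chords of span $2t-1$ or $2\ell-2t+1$ are forbidden outright; more importantly, suitably chosen pairs and short families of chords can be used to realize \emph{every} even length in an interval of the circumference, so that the absence of $C_{2t}$ rules out large families of chord patterns. Turning this into a clean count — essentially a bipartite weak-pancyclicity statement to the effect that a Hamiltonian bipartite graph carrying too many chords must contain every even cycle up to its length — yields a bound of the form $e(G[V(C)])\le (t-1)\ell+(\ell-t+1)$.

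Adding the two estimates gives $e(G)\le (t-1)(n-\ell)+(m-\ell)+(t-1)\ell+(\ell-t+1)=(t-1)n+m-t+1=(t-1)(n-1)+m$, the desired contradiction. I expect the main obstacle to be the chord analysis of the previous paragraph: not only must one show that few chords survive the $C_{2t}$-free condition, but the two bounds above must add up \emph{exactly} to $(t-1)(n-1)+m$ with no slack (mirroring the tightness of both Jackson's theorem and the Tur\'an value), so each estimate has to be pushed to its sharp form and the cases $\ell=t+1,\dots,2t-2$, together with any small-$t$ degeneracies, have to be controlled uniformly.
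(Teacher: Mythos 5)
Your decomposition of $E(G)$ into edges inside $V(C)$ and edges meeting $R=V(G)\setminus V(C)$ has the right general shape, and your bound on $e(G[V(C)])$ is unproblematic: it is weaker than the Entringer--Schmeichel theorem (a Hamiltonian bipartite graph of order $2\ell\ge 8$ with more than $\ell^2/2$ edges is bipancyclic), which is exactly what the paper invokes at this step, and indeed $\ell^2/2\le t\ell-t+1$ throughout $t+1\le \ell\le 2t-2$. The genuine gap is your bound for the edges meeting $R$. What you call ``the estimate advertised in the abstract'' is the paper's Theorem~\ref{Thm-BipartiteBondy}, and in the relevant case (smaller part of size $m\le 2\ell$) its conclusion is $e(G-C)+e(G-C,C)\le \ell(n-1-\ell)+m$, with leading coefficient $\ell$, \emph{not} $t-1$. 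No bound with coefficient $t-1$ can follow from maximality/rotation arguments alone: in $K_{m,n}$ the longest cycle has $\ell=m$ and every vertex outside $C$ has degree $\ell\ge t+1>t-1$, so maximality of $C$ simply does not see the parameter $t$. Your claimed bound $(t-1)(n-\ell)+(m-\ell)$ would therefore have to exploit the $C_{2t}$-freeness of $G$ in an essential way (to cap the degrees of outside vertices at roughly $t-1$, and to control edges among outside vertices), and that is a substantive lemma which you neither state precisely nor prove --- it is where the entire difficulty of the theorem is hiding.

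Moreover, the weaker bound that \emph{is} provable does not suffice for your argument: adding $\ell(n-1-\ell)+m$ to the Entringer--Schmeichel bound gives $e(G)\le -\ell^2/2+\ell(n-1)+m+O(1)$, which exceeds the target $(t-1)(n-1)+m$ by roughly $(n-1)(\ell-t+1)-\ell^2/2$, a quantity that is unbounded as $n\to\infty$ while $m,\ell\le 2t-2$ stay fixed. This is precisely why the paper does not attack the unbalanced case head-on. Its route is: first prove the balanced case $n=m$ (Theorem~\ref{Thm-balanced}), where $n-\ell\le t-3$ is bounded, so the $\ell$-coefficient Bondy-type bound together with Entringer--Schmeichel does close (the quadratic saving $-\ell^2/2$ absorbs the error); then deduce the general case (Theorem~\ref{Thm-general}) by induction on $n+m$: if some $y\in Y$ has $d(y)\le t-1$, delete it, noting that the edge hypothesis is preserved with $n-1$ in place of $n$; otherwise every $y\in Y$ has degree at least $t$, making $e(G)$ large enough that an edge can be deleted, contradicting minimality. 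That reduction --- shedding low-degree vertices on the large side until the graph is balanced --- is the idea you are missing; with it, your inside/outside decomposition becomes exactly the paper's proof of the balanced case, and without it your outside estimate is an unproved claim carrying the full weight of the theorem.
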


The above theorem in fact tells us exact information on
``bipartite Tu\'an number" of large even cycles.
Following F\"{u}redi \cite{F96}, we define the bipartite
Tur\'an number $ex(m,n,H)$ of a graph
$H$ to be the maximum number of edges in an $H$-free bipartite graph
with two parts of sizes $m$ and $n$.
In this paper, we mainly focus on the exact formula of $ex(m,n,C_{2t})$ for some range.
For a similar problem on paths, Gy\'{a}rfas, Rousseau, and
Schelp \cite{GRS84} completely determined the function $ex(m,n,P_t)$.
When restricting to cycles, the situation turns out to be much more difficult.

Let us recall the classical result that
$ex(n,n,C_4)=(1+o(1))n^{\frac{3}{2}}$ due to K\H{o}v\'{a}ri,
S\'{o}s, and Tur\'an \cite{KST54}. For the function $ex(m,n,C_6)$,
it is closely related to a number-theoretical problem on product
representations of squares, which was studied by Erd\H{o}s,
S\'{a}rk\"{o}zy, and S\'{o}s in \cite{ESS95}. They conjectured that:
(a) $ex(m,n,C_6)<c(mn)^{\frac{2}{3}}$ when $n> m\geq
n^{\frac{1}{2}}$; and (b) $ex(m,n,C_6)<2n+c(mn)^{\frac{2}{3}}$ when
$n\geq m^2$. The part (a) of this conjecture and a weaker result of
part (b) were confirmed by S\'{a}rk\"{o}zy \cite{S95}. For the part
(b), it was finally settled by Gy\"{o}ri \cite{G97}. Interestingly,
motivated by the extremal result on short cycles, Gy\"{o}ri
\cite{G97} suggested a general conjecture on longer cycles.

\begin{conjectureA}[{\rm Gy\"{o}ri \cite[p.373]{G97}, see also \cite{BGMV07}}]\label{Conj-Gyori}
Suppose that $m,n,k$ are integers, where $n\geq m^2$,
$m\geq t\geq 3$. Then
$$
ex(m,n,C_{2t})\leq (t-1)n+m-t+1.
$$
\end{conjectureA}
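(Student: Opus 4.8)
The plan is to prove the contrapositive: I take a bipartite graph $G=(X,Y;E)$ with $|X|=m$, $|Y|=n$, where $n\ge m^2$ and $m\ge t\ge 3$, assume $e(G)>(t-1)n+m-t+1$, and produce a cycle of length exactly $2t$. Since $(t-1)n+m-t+1=(t-1)(n-1)+m$, the threshold to beat is exactly the one appearing both in Theorem~\ref{Thm-SharpenJackson} and in the first branch of Jackson's Theorem~\ref{Thm-Jackson}. I would therefore split the argument at $m=2t-2$, the value at which Jackson's two branches meet.

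\emph{First regime: $t\le m\le 2t-2$, i.e.\ $t\ge\frac{m}{2}+1$.} Here the conclusion is immediate from Theorem~\ref{Thm-SharpenJackson}: since $m\ge t\ge 3$ we have $n\ge m^2\ge m$, so its hypotheses hold, and the assumption $e(G)>(t-1)(n-1)+m$ already forces a $C_{2t}$. This gives $ex(m,n,C_{2t})\le(t-1)n+m-t+1$ on this range, in fact in the sharper form requiring only $n\ge m$ rather than $n\ge m^2$. From the viewpoint of the conjecture this regime is immediate once Theorem~\ref{Thm-SharpenJackson} is in hand; all the difficulty has been pushed into proving that theorem.

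\emph{Second regime: $m\ge 2t-1$, i.e.\ $t\le\frac{m+1}{2}$.} Now Theorem~\ref{Thm-SharpenJackson} is silent and the strong hypothesis $n\ge m^2$ must be used directly. One cannot shortcut this by forcing a $K_{t,t}$ (which would contain $C_{2t}$): the extremal $C_{2t}$-free ``hub'' graph --- take $t-1$ vertices of $X$ joined to all of $Y$, and each of the remaining $m-t+1$ vertices of $X$ joined to one common vertex of $Y$ --- already has exactly $(t-1)n+m-t+1$ edges, and it is itself $K_{t,t}$-free since its only $X$-vertices of degree $\ge2$ are the $t-1$ hubs; hence a $K_{t,t}$ cannot be forced at this edge count (cf.\ the Zarankiewicz problem \cite{KST54}). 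The embedding of $C_{2t}$ must therefore genuinely use the flexibility of a cycle. The plan is to exploit $n\ge m^2$: an excess over $(t-1)n$ edges, spread over so many vertices on the large side, forces either a large $(t-1)$-core on $Y$ or many pairs of $X$-vertices with common neighbourhoods of size $\ge t$, and one threads these into an alternating cycle on exactly $t$ vertices of each side. As this regime is the part of Gy\"{o}ri's range disjoint from Theorem~\ref{Thm-SharpenJackson}, for a complete proof I would either invoke the previously known cases of the conjecture here or, if no citation is available, carry out the embedding just sketched.

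The main obstacle is the exact length control, which is precisely what Theorem~\ref{Thm-SharpenJackson} supplies and what Jackson's theorem does not: Jackson yields only a cycle of length \emph{at least} $2t$, whereas a $C_{2t}$-free graph may still carry longer cycles, so upgrading ``at least $2t$'' to ``exactly $2t$'' is the crux of the whole matter. A secondary check is bookkeeping at the seam --- verifying that $m\le 2t-2$ and $m\ge 2t-1$ together exhaust the range $m\ge t\ge 3$ and that both regimes deliver the same bound $(t-1)(n-1)+m=(t-1)n+m-t+1$.
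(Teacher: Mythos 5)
Your split into two regimes is the right diagnosis of where the difficulty lies, and your first regime is handled correctly: for $t\leq m\leq 2t-2$ (i.e., $t\geq\frac{m}{2}+1$), Theorem \ref{Thm-SharpenJackson} gives the bound, even under the weaker hypothesis $n\geq m$. This matches the paper, whose Corollary (derived from Theorem \ref{Thm-general} by setting $k=m-t$) is exactly this statement.

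Your second regime, however, is a fatal gap rather than a fillable one: Conjecture \ref{Conj-Gyori} is \emph{false} there, and the paper says so explicitly. Gy\"{o}ri himself \cite{G06} disproved it for $t=3$, and Balbuena, Garc\'{i}a-V\'{a}zquez, Marcote, and Valenzuela \cite{BGMV07} disproved it for all $t\leq\frac{m+1}{2}$ --- which is precisely your range $m\geq 2t-1$. In that range there exist $C_{2t}$-free bipartite graphs with part sizes $m$ and $n\geq m^2$ having strictly more than $(t-1)n+m-t+1$ edges, so no embedding argument, however the excess edges are ``threaded,'' can succeed; and the ``previously known cases of the conjecture'' you propose to invoke there do not exist --- what is known there are counterexamples. (Your own observation that the edge threshold cannot force a $K_{t,t}$ was a warning sign, but the situation is worse: the bound itself fails, not merely one route to it.) This is why the paper never claims to prove Conjecture \ref{Conj-Gyori} as stated; it proves only the surviving part, Conjecture \ref{Conj-Cons}, i.e., the range $t\geq\frac{m}{2}+1$, and describes this as confirming ``the rest of'' Gy\"{o}ri's conjecture. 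A correct treatment of the statement is therefore: (a) your first regime, via Theorem \ref{Thm-SharpenJackson}; and (b) for the second regime, a citation of the counterexamples in \cite{G06} and \cite{BGMV07} recording that the asserted inequality does not hold there.
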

Using estimate on total weights of triangle-free multi-hypergraphs,
Gy\"{o}ri himself \cite{G06} disproved Conjecture \ref{Conj-Gyori}
for the case $t=3$. Balbuena, Garc\'{i}a-V\'{a}zquez, Marcote,
and Valenzuela \cite{BGMV07} further disproved it when $t\leq \frac{m+1}{2}$.
As far as we know, this conjecture remains open when $t\geq \frac{m}{2}+1$.
The following conjecture sharpens the left part of Gy\"{o}ri's conjecture.

\begin{conjectureA}\label{Conj-Cons}
Suppose that $m,n,t$ are positive integers, where $n\geq m\geq t\geq \frac{m}{2}+1$. Then
$$ex(m,n,C_{2t})=(t-1)n+m-t+1.$$
\end{conjectureA}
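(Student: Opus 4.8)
The plan is to prove the equality by establishing the two matching inequalities separately, using Theorem~\ref{Thm-SharpenJackson} for one direction and an explicit construction for the other. For the upper bound $ex(m,n,C_{2t})\le (t-1)n+m-t+1$, I would invoke Theorem~\ref{Thm-SharpenJackson} directly. First I would check that the hypotheses line up: the constraint $m\ge t\ge \frac{m}{2}+1$ is exactly equivalent to $t\le m\le 2t-2$ (since $t\ge \frac{m}{2}+1$ iff $m\le 2t-2$), and the threshold appearing there expands as $(t-1)(n-1)+m=(t-1)n+m-t+1$. Thus Theorem~\ref{Thm-SharpenJackson} asserts that any bipartite graph with parts of sizes $m,n$ and more than $(t-1)n+m-t+1$ edges already contains a cycle of length exactly $2t$; contrapositively, every $C_{2t}$-free such graph has at most $(t-1)n+m-t+1$ edges, which is the desired upper bound.

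For the lower bound I would exhibit an explicit $C_{2t}$-free graph attaining $(t-1)n+m-t+1$ edges. Write $X$ for the side of size $m$ and $Y$ for the side of size $n$, and fix a subset $S\subseteq X$ with $|S|=t-1$, which is possible since $m\ge t$. Join every vertex of $S$ to every vertex of $Y$, contributing $(t-1)n$ edges, and then attach to each of the remaining $m-t+1$ vertices of $X\setminus S$ a single pendant edge to an arbitrary vertex of $Y$. Since each pendant edge has its $X$-endpoint outside $S$, it is genuinely new, and distinct pendant edges have distinct $X$-endpoints, so the total edge count is exactly $(t-1)n+(m-t+1)$.

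It then remains to verify that this graph is $C_{2t}$-free; in fact I would show the stronger statement that it contains no cycle of length at least $2t$. The key observation is that every vertex of $X\setminus S$ has degree $1$, so no such vertex can lie on a cycle. Hence every cycle uses only vertices of $S$ on the $X$-side, and since a cycle in a bipartite graph alternates between the two sides, it contains at most $|S|=t-1$ vertices of $X$ and therefore has length at most $2(t-1)=2t-2<2t$. Combining this lower bound with the upper bound from the previous paragraph yields the claimed equality $ex(m,n,C_{2t})=(t-1)n+m-t+1$.

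I expect essentially all of the difficulty to be hidden in Theorem~\ref{Thm-SharpenJackson}, on which the upper bound rests: its proof must control, for a longest cycle $C$, the number of edges incident to at most one vertex of $C$, and it is this structural estimate---rather than the elementary construction above---that is the genuine obstacle, so the real work of the paper lies in sharpening Jackson's theorem rather than in deducing the Tur\'an-number formula from it.
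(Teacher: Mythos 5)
Your proposal is correct and takes essentially the same route as the paper: the paper likewise pairs a pendant-edge construction (attaching the star $K_{1,k+1}$ at one vertex of the $n$-side of $K_{m-k-1,n}$, a special case of your construction where all pendant edges share one endpoint) with the sharpened Jackson threshold for the upper bound. The only cosmetic difference is that the paper deduces the upper bound from Theorem~\ref{Thm-general} with $k=m-t$ (whose hypothesis $m\geq 2k+2$ is exactly $t\geq\frac{m}{2}+1$ and whose threshold $n(m-k-1)+k+2=(t-1)n+m-t+2$ matches yours) rather than from Theorem~\ref{Thm-SharpenJackson}, which the paper never proves independently but declares equivalent to this corollary---so, as you anticipate, all of the genuine work lives in the proof of that theorem.
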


For general results on $ex(m,n,C_{2t})$, an upper bound was obtained by Naor and Verstra\"{e}te \cite{NV05},
who proved that for $m\leq n$ and $t\geq 2$,
$$ex(m,n,C_{2t})\leq\left\{\begin{array}{ll}
  (2t-3)\cdot [(mn)^{\frac{t+1}{2t}}+m+n],    & \mbox{if }t\mbox{ is odd};\\
  (2t-3)\cdot [m^{\frac{t+2}{2t}}n^{\frac{1}{2}}+m+n],  & \mbox{if }t\mbox{ is even}.
\end{array}\right.$$
Gy\"{o}ri \cite{G97} proved that there exists some
$c_t>0$ such that for $n\geq m^2$,
$$
ex(m,n,C_{2t})\leq (t-1)n+c_t\cdot m^2.
$$
Very recently, Jiang and Ma \cite{JM18} proved the following new bound:
\begin{thm}[Jiang and Ma, Proposition 5.5 in \cite{JM18}]
There exists a constant $d_t>0$ such that for any positive integers $n\geq m\geq 2$,
$$
ex(m,n,C_{2t})\leq (t-1)n+d_t\cdot m^{1+\frac{1}{[t/2]}}.
$$
\end{thm}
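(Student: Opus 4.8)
The plan is to fix a $C_{2t}$-free bipartite graph $G=(X,Y;E)$ with $|X|=m\le |Y|=n$ and bound its edges, where the term $(t-1)n$ should be read as the trivial contribution of giving each vertex of $Y$ degree $t-1$. So I would first isolate the genuine excess. Writing $d(y)$ for the degree of $y\in Y$ and $Y'=\{y\in Y:d(y)\ge t\}$, one has $e(G)-(t-1)n=\sum_{y\in Y}(d(y)-(t-1))\le \sum_{y\in Y'}(d(y)-(t-1))$, since the vertices of degree at most $t-1$ only decrease the sum. Hence it suffices to prove $\sum_{y\in Y'}(d(y)-(t-1))=O_t\!\left(m^{1+1/\lfloor t/2\rfloor}\right)$, and everything reduces to controlling the high-degree side.

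Next I would encode the excess as an auxiliary edge-coloured graph $A$ on the vertex set $X$. For each $y\in Y'$ choose an arbitrary pairing (a near-perfect matching) $F_y$ of the neighbourhood $N(y)\subseteq X$, giving $\lfloor d(y)/2\rfloor$ pairs, and colour every pair in $F_y$ with the colour $y$. Because $t\ge 2$ we have $d(y)-(t-1)\le d(y)-1\le 2\lfloor d(y)/2\rfloor$, so $\sum_{y\in Y'}(d(y)-(t-1))\le 2\,|E(A)|$ and it is enough to bound $|E(A)|$. Each colour class $F_y$ is a matching, so the colouring of $A$ is proper. The key point is a lifting step: every pair $xx'\in F_y$ spans the path $x\,y\,x'$ of length $2$ in $G$, so a rainbow cycle $x_1x_2\cdots x_{2s}x_1$ in $A$, with distinct colours $y_1,\dots,y_{2s}$, lifts to the genuine cycle $x_1y_1x_2y_2\cdots x_{2s}y_{2s}x_1$ of length $4s$ in $G$; its vertices are distinct because the $x_i$ are distinct and the $y_i$ are distinct and lie in $Y$. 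Taking $s=\lfloor t/2\rfloor$, for even $t$ this is a $C_{2t}$, so $A$ contains no rainbow $C_{2\lfloor t/2\rfloor}$.

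To finish I would invoke the rainbow Tur\'an bound $ex^*(m,C_{2s})=O_s\!\left(m^{1+1/s}\right)$ for even cycles (Keevash--Mubayi--Sudakov--Verstra\"{e}te, in the sharp form due to Janzer). Applied with $s=\lfloor t/2\rfloor$ to a properly-coloured simple subgraph of $A$ obtained by keeping one edge per pair, it yields $O_t\!\left(m^{1+1/\lfloor t/2\rfloor}\right)$ distinct pairs; combined with a bound on edge multiplicities this controls $|E(A)|$ and closes the argument, since $1/\lfloor t/2\rfloor=1/[t/2]$ matches the exponent in the statement.

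The main obstacles are two. First, $A$ is a priori a multigraph: a pair $xx'$ may be chosen by many colours, and parallel edges inflate $|E(A)|$ without creating any rainbow short cycle (indeed $x,x'$ may have arbitrarily many common neighbours in a $C_{2t}$-free graph, as $K_{2,\mu}$ shows). One must therefore either argue that choosing the pairings $F_y$ greedily keeps the multiplicity below a constant $C_t$, or show directly that a pair shared by more than $C_t$ high-degree vertices already forces a $C_{2t}$ — a local forcing lemma that I expect to be the technical heart. Second, for odd $t$ the lift of a rainbow $C_{2\lfloor t/2\rfloor}$ has length $2t-2$, not $2t$, so an extra idea is needed: lengthen a single auxiliary edge into a $G$-path of length $4$ (a detour through one more vertex of $Y$), or equivalently forbid a rainbow cycle-with-pendant structure, which complicates the extremal input. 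A cleaner but heavier alternative, and the route I would fall back on, is to pass to a sublinear-expander subgraph of comparable density and use that such expanders contain cycles of every length in a long interval, producing a $C_{2t}$ directly once the density exceeds $(t-1)n+O_t\!\left(m^{1+1/\lfloor t/2\rfloor}\right)$.
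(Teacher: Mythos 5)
This statement is not proved in the paper at all: it is quoted verbatim as Proposition 5.5 of Jiang and Ma \cite{JM18} and serves only as a benchmark against which Conjecture B is compared, so there is no internal proof to measure your attempt against; it must therefore stand on its own, and it does not. Your opening reduction and lifting step are sound, and they are indeed the natural way to see where the exponent comes from: discarding the low-degree part of $Y$ and projecting each remaining neighbourhood to pairs on $X$ amounts to viewing $G$ as the incidence graph of a hypergraph on $X$ in which a $C_{2t}$ of $G$ is a Berge cycle of length $t$. But the two ``obstacles'' you list at the end are not loose ends to be tidied up --- they are the entire technical content of the theorem, and your sketch resolves neither.

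Concretely: for the multiplicity problem, one of your two proposed fixes is provably a dead end. A pair shared by arbitrarily many degree-$t$ vertices does \emph{not} force a $C_{2t}$: let every $y_i$ be adjacent to $x$, $x'$ and to its own private set $Z_i$ of $t-2$ further vertices of $X$, with the $Z_i$ pairwise disjoint; every cycle of this graph is a $4$-cycle through $\{x,x'\}$, yet the pair $xx'$ lies in the neighbourhood of unboundedly many degree-$t$ vertices. So any ``local forcing lemma'' must be stated relative to the already-chosen pairings $F_y$, which is circular unless your other fix --- the greedy multiplicity bound, which you also do not prove --- is carried out first. For odd $t$ the gap is structural rather than technical: the configuration you must forbid in $A$ is an \emph{odd} rainbow cycle $C_t$ (or your cycle with one edge replaced by a length-$4$ path of $G$), and no rainbow Tur\'an bound of the required form can exist for odd cycles, since a properly coloured balanced complete bipartite graph on $m$ vertices has roughly $m^2/4$ edges and no odd cycle whatsoever; Janzer's theorem concerns even cycles (and in any case postdates both this paper and \cite{JM18}), so it cannot be invoked, and the hybrid theta-type rainbow statement you would need is exactly what is missing. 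The expander fallback does not rescue the odd case either: the density of $G$ sits on the $Y$-side, whose size $n$ is unconstrained relative to $m$, so $G$ itself may have average degree barely above $2(t-1)$, and sublinear-expander machinery has nothing to grip until one passes to the projection on $X$ --- which returns you to the two unresolved obstacles.
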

So, if Conjecture \ref{Conj-Cons} is true, then it improves
Jiang and Ma's result for the range $t\geq \frac{m}{2}+1$.

In this paper, we aim to solve the aforementioned conjectures.
In fact, we prove the following stronger result, which also confirms a
conjecture in \cite{A09} (see Conjecture 1 in \cite[p.30]{A09}).
\begin{thm}\label{Thm-general}
Let $G=(X,Y;E)$ be a bipartite graph with $|X|= m$ and $|Y|=n$.
Suppose that $n\geq m\geq 2k+2$ for some $k\in\mathbb{N}$. If
$e(G)\geq n(m-k-1)+k+2$, then $G$ contains cycles of all even
lengths from 4 up to $2m-2k$.
\end{thm}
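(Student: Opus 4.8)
The plan is to prove Theorem~\ref{Thm-general} by induction on $m$, with the key tool being an analysis of a longest cycle together with control over edges incident to vertices outside it — precisely the estimate the authors advertise in the abstract as their main technical device. The base case, roughly $m=2k+2$, reduces to showing that a dense enough bipartite graph contains a $4$-cycle and a $6$-cycle (i.e. cycles up to length $2m-2k=4$), which follows from the Kővári--Sós--Turán bound or a direct counting of cherries once $e(G)$ exceeds the stated threshold $n(m-k-1)+k+2$. The strategy for the inductive step is to show that $G$ contains a long even cycle of length exactly $2m-2k$, and then to descend: either recover the shorter even cycles by examining a longest cycle $C$ and finding suitable chords or detours that shorten it by exactly $2$ at each step, or delete a carefully chosen vertex from $X$ to drop into the case $(m-1,n)$ while preserving enough edges to apply the induction hypothesis.

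\emph{First} I would invoke Jackson's Theorem~\ref{Thm-Jackson} (or the sharpened Theorem~\ref{Thm-SharpenJackson}) to guarantee the existence of a \emph{long} cycle. With $|X|=m$ and the edge count $e(G)\ge n(m-k-1)+k+2$, one checks that this exceeds Jackson's threshold in the regime $m\le 2t-2$ for the appropriate $t$, forcing a cycle of length at least $2m-2k$; the arithmetic here is where the exact constant $n(m-k-1)+k+2$ must line up with $(t-1)(n-1)+m$ after setting $t=m-k$. \emph{Then}, given a longest cycle $C$, I would let $H$ be the set of vertices not on $C$ and bound the number of edges incident to $V(G)\setminus V(C)$ (edges with at most one endpoint on $C$); the announced estimate says this number is small relative to $e(G)$, so that $C$ must absorb almost all the edges. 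The consequence is that $C$ has length close to $2m$, and crucially that the vertices of $C$ carry many chords.

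\emph{The descent to all shorter even lengths} is where I expect the real work to lie. The cleanest route is to prove that if $G$ has a cycle of length $2\ell$ with $4<2\ell\le 2m-2k$ and still satisfies the edge condition, then $G$ has a cycle of length $2\ell-2$. For this I would exploit a chord of the longest cycle $C$: a chord joining two vertices of $C$ at the correct cyclic distance splits $C$ into two arcs whose lengths can be combined into cycles of intermediate even length. Because $G$ is bipartite, every cycle and every chord-created cycle has even length automatically, which removes parity complications but forces the chords to connect vertices in opposite parts at odd arc-distance. The delicate point is guaranteeing that chords exist at \emph{every} required distance, not merely at some distance; this is exactly what the edge surplus $k+2$ above the extremal number $(t-1)n$ should buy, via a counting argument on the distribution of chord endpoints around $C$.

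\emph{The main obstacle}, as I see it, is the simultaneous realization of \emph{all} even lengths in the range $[4,2m-2k]$ from a single dense configuration, rather than merely producing one long cycle or one short cycle. A longest-cycle argument readily yields the extremes, but the intermediate lengths require either a robust chord-rotation (à la Pósa) adapted to the bipartite setting, or an induction that deletes a vertex of $X$ while maintaining the edge inequality $e(G')\ge n(m-1-k-1)+k+2 = n(m-k-2)+k+2$ — and verifying that some vertex of low enough degree can be removed without violating this bound, or else that a high-degree vertex forces the needed short cycles directly, is the crux. I would therefore structure the proof so that the announced edge-estimate lemma on edges incident to at most one vertex of a longest cycle does double duty: it both pins down the length of $C$ and certifies the chord density needed to walk down through all even lengths.
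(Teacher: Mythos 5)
Your proposal has the right skeleton (long cycle via Jackson plus control of off-cycle edges), but it leaves a genuine gap exactly where you admit ``the real work'' lies: the mechanism producing \emph{all} even lengths is never supplied, and the one you gesture at would not work. The surplus $k+2$ above $(t-1)n$ is a bounded constant; no counting argument on ``chord endpoints at every cyclic distance'' can be extracted from it directly, and a P\'osa-type rotation adapted to bipartite graphs is not developed in your sketch. The paper's resolution is different in kind: it proves a bipartite analogue of Bondy's lemma (Theorem~\ref{Thm-BipartiteBondy}), bounding $e(G-C)+e(G-C,C)$ for a longest cycle $C$, so that the hamiltonian subgraph $G[C]$ must retain more than half of all possible edges, i.e.\ $e(G[C])>\frac{c^2}{2}$; then a known black box, the Entringer--Schmeichel theorem (Theorem~\ref{ThES}: a hamiltonian bipartite graph of order $2c\ge 8$ with more than $\frac{c^2}{2}$ edges is bipancyclic), delivers every even length from $4$ to $c(G)$ in one stroke. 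Without this (or an equivalent bipancyclicity criterion), your descent step has no engine, and even the single missing length $2m-2k$ in your induction is unreachable: Jackson only guarantees a cycle of length \emph{at least} $2m-2k$, not exactly that length.

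Two further concrete defects. First, your base case is miscomputed: at $m=2k+2$ one has $2m-2k=2k+4$, not $4$ (except when $k=0$), so the base case demands all even cycles up to length $2k+4$ and is just as hard as the general statement --- your induction on $m$, which deletes vertices of $X$ and thus moves \emph{away} from balance, concentrates all the difficulty in this unresolved base case. The paper's reduction goes the opposite way: it deletes low-degree vertices of $Y$ (and spare edges) from a minimal counterexample until the graph is balanced, and all the analytic work (Theorem~\ref{Thm-BipartiteBondy} plus Theorem~\ref{ThES}) is done once, in the balanced case (Theorem~\ref{Thm-balanced}). Second, your inductive step as written only combines the inductive lengths $4,\dots,2m-2k-2$ with a cycle of length $\ge 2m-2k$; bridging that last step requires precisely the bipancyclicity argument you are missing. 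In short: correct architecture at the top level, but the decisive lemma --- density of $G[C]$ forced by the Bondy-type bound, converted into bipancyclicity by Entringer--Schmeichel --- is absent, and nothing in the proposal substitutes for it.
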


Set $t=m-k$ in Conjecture 2. Let $G$ be a graph obtained by
identifying one vertex in the $n$-set from $K_{m-k-1,n}$ and the
other vertex in the $1$-set from $K_{1,k+1}$. Then a longest cycle
in $G$ is of length $2m-2k-2$ and $e(G)=(m-k-1)n+k+1$.
This tells us $ex(m,n,C_{2t})\geq (m-k-1)n+k+1$. An
immediate consequence of Theorem \ref{Thm-general} is that
$ex(m,n,C_{2t})\leq (m-k-1)n+k+1$. Thus, we have the following
result, which is equivalent to Theorem \ref{Thm-SharpenJackson} and
confirms Conjecture \ref{Conj-Cons}.
\begin{cor}
For any positive integers $m,n,t$, if $n\geq m\geq t\geq
\frac{m}{2}+1$, then
$$ex(m,n,C_{2t})=(t-1)n+m-t+1.$$
\end{cor}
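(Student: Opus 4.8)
The plan is to prove the equality by two matching inequalities, reparametrizing via $k=m-t$ so that Theorem~\ref{Thm-general} applies directly. Under this substitution the hypothesis $n\geq m\geq t\geq \frac{m}{2}+1$ becomes exactly $n\geq m\geq 2k+2$ with $k=m-t$ a nonnegative integer: indeed $t\le m$ gives $k\ge 0$, while $t\geq\frac{m}{2}+1$ rearranges to $m\ge 2(m-t)+2=2k+2$. Thus the setting of Theorem~\ref{Thm-general} is in force, and moreover $2m-2k=2t$, so the range $4\le 2t\le 2m-2k$ targeted by that theorem is nonempty and terminates precisely at $2t$.

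For the lower bound I would exhibit the $C_{2t}$-free graph already sketched in the text. Take the complete bipartite graph $K_{m-k-1,n}$ with parts $A$ of size $m-k-1$ and $B$ of size $n$, and attach a star $K_{1,k+1}$ by identifying its center with a chosen vertex $b\in B$. The $k+1$ leaves together with $A$ form the $X$-side, of size $(m-k-1)+(k+1)=m$, while $B$ is the $Y$-side of size $n$; hence this is a bipartite graph with the correct part sizes, and it has $(m-k-1)n+(k+1)=(t-1)n+m-t+1$ edges. Since each leaf of the star has degree $1$, no leaf lies on any cycle, so every cycle is contained in $K_{m-k-1,n}$, whose longest cycle has length $2(m-k-1)=2t-2<2t$. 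Therefore this graph contains no $C_{2t}$, which gives $ex(m,n,C_{2t})\geq (t-1)n+m-t+1$.

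For the upper bound, let $G=(X,Y;E)$ be any $C_{2t}$-free bipartite graph with $|X|=m$, $|Y|=n$, and $n\geq m$. If $e(G)\geq n(m-k-1)+k+2$, then Theorem~\ref{Thm-general} forces $G$ to contain cycles of all even lengths from $4$ up to $2m-2k=2t$, and in particular a copy of $C_{2t}$, contradicting $C_{2t}$-freeness. Hence $e(G)\leq n(m-k-1)+k+1=(t-1)n+m-t+1$, and taking the maximum over all such $G$ yields $ex(m,n,C_{2t})\leq (t-1)n+m-t+1$. Combining the two bounds proves the claimed equality.

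The argument is short precisely because the difficulty has been isolated in Theorem~\ref{Thm-general}, which we are entitled to assume; the only points requiring genuine care are the arithmetic translation of the range $t\geq \frac{m}{2}+1$ into $m\geq 2k+2$ (ensuring $k$ is a nonnegative integer in the admissible range and that $2t$ is attained as $2m-2k$), and the verification that the extremal construction is simultaneously $C_{2t}$-free and of the right edge count, so that the lower and upper bounds meet exactly.
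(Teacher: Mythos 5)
Your proof is correct and is essentially the paper's own argument: the same reparametrization $k=m-t$, the same extremal construction ($K_{m-k-1,n}$ with a pendant star $K_{1,k+1}$ identified at a vertex of the $n$-side) for the lower bound, and the same application of Theorem~\ref{Thm-general} for the upper bound. You merely spell out details the paper leaves implicit, such as verifying that the construction is $C_{2t}$-free and that the hypotheses translate exactly into $n\geq m\geq 2k+2$.
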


The basic case of the proof of Theorem \ref{Thm-general} is
that the bipartite graph is balanced.
For this special case, a slightly stronger theorem will be proved.

\begin{thm}\label{Thm-balanced}
Let $G$ be a balanced bipartite graph of order $2n$, where $n\geq2k+2$,
$k\in \mathbb{N}$. If $e(G)\geq(n-k-1)n+k+2$,
then there hold:\\
(i) The circumference $c(G)\geq 2n-2k$; and
(ii) $G$ contains cycles of all even lengths from 4 to $c(G)$.
\end{thm}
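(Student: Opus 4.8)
The plan is to prove the two assertions separately: (i) is essentially immediate from Jackson's theorem, while (ii) I would reduce to a self-contained statement about the chords of a longest cycle. Throughout I must stay within what is legitimately available here, namely Theorem~\ref{Thm-Jackson} and the paper's own estimate on edges meeting a longest cycle; I would avoid Theorem~\ref{Thm-SharpenJackson} and Theorem~\ref{Thm-general}, since those are downstream of this result.

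For (i) I would apply Theorem~\ref{Thm-Jackson} with $t=n-k$, regarding both colour classes as having size $n$. The side condition $m\le 2t-2$ then reads $n\le 2(n-k)-2$, i.e.\ exactly the hypothesis $n\ge 2k+2$, and the threshold becomes $(n-1)(t-1)+m=(n-1)(n-k-1)+n=(n-k-1)n+k+1$. Since $e(G)\ge (n-k-1)n+k+2$ beats this by one, Jackson's theorem yields a cycle of length at least $2t=2n-2k$, which is precisely $c(G)\ge 2n-2k$.

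For (ii) I would fix a longest cycle $C$, set $c=c(G)\ge 2n-2k$ and $R=V(G)\setminus V(C)$, so $|R|=2n-c\le 2k$. The first step is to show that $C$ is heavily chorded. Splitting the edges of $G$ into the $c$ edges of $C$, the chords of $C$, and the edges incident to at most one vertex of $C$, I would bound the last family by the paper's main estimate for a longest cycle; as that estimate caps those edges well below $e(G)$, subtracting it together with $c$ from $e(G)\ge (n-k-1)n+k+2$ leaves a chord count that is a substantial fraction of the maximum $(c/2)^2-c$ available on $V(C)$. In other words, $G[V(C)]$ is a near-complete bipartite graph, and it suffices to prove the purely combinatorial lemma that such a densely chorded even cycle contains an even cycle of every length from $4$ to $c$.

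That lemma is the combinatorial heart and, I expect, the main obstacle. I would attack it through two overlapping regimes. First, since only few chords are missing, a vertex-cover/greedy argument extracts a large balanced complete bipartite subgraph $K_{a,a}\subseteq G[V(C)]$, and $K_{a,a}$ trivially contains cycles of all even lengths from $4$ up to $2a$; this covers the short and middle lengths, with the smallest case $C_4$ also guaranteed by density via the K\H{o}v\'ari--S\'os--Tur\'an bound. Second, the observation that a chord of $C$ at odd arc-distance $d$ cuts $C$ into even cycles of lengths $d+1$ and $c-d+1$ means that short chords of $C$ realize all lengths just below $c$. The delicate point is making the two ranges meet: the deficiency $2a$-vs-$c$ and the reach of the short chords are both controlled by the (small) number of missing chords, so one must show these windows overlap rather than leaving an uncovered middle length. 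I anticipate that the only way the argument can stall is at an extremal configuration, and here I would invoke the equality/near-equality analysis of the main estimate to exclude exactly the graphs built from $K_{n-k-1,n}$ (which sit one edge below our hypothesis), thereby closing the gap and yielding all even lengths up to $c$.
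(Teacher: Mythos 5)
Your part (i) and the opening reduction of part (ii) are exactly the paper's: apply Theorem~\ref{Thm-Jackson} with $t=n-k$, then split $e(G)$ into the edges inside $V(C)$ and the edges incident to at most one vertex of $C$, bounding the latter by Theorem~\ref{Thm-BipartiteBondy}(1). The gap is in what this reduction actually gives you. Writing $2\tau$ for the length of $C$, the computation yields only $e(G[V(C)])>\tau^2/2$, i.e.\ density just above one half --- \emph{not} a near-complete bipartite graph. Concretely, take $k$ maximal ($n=2k+2$) and $\tau=n-1$: then $\tau(n-1-\tau)+n$ caps the outside edges, and the guarantee left inside $V(C)$ is about $2k^2+3k$ edges out of a possible $(2k+1)^2\approx 4k^2$, so roughly half of the chords may be missing. (Your ``only few chords are missing'' picture is accurate only when $\tau$ is near its minimum $n-k$; your plan does not distinguish these regimes, and the dangerous one is $\tau$ near $n-1$ with $k=\Theta(n)$.)

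At density $\tfrac12+\varepsilon$ the combinatorial lemma, \emph{as you propose to prove it}, fails. A bipartite graph on $\tau+\tau$ vertices with $\tau^2/2$ edges is only guaranteed to contain $K_{a,a}$ with $a=O(\log\tau)$ (the K\H{o}v\'ari--S\'os--Tur\'an bound \cite{KST54} is essentially sharp at constant density, and the presence of the Hamilton cycle does not help), so your first window covers even lengths only up to $O(\log\tau)$, while the short-chord window covers lengths near $2\tau$; these are separated by a gap of order $\tau$, and no near-equality analysis of Theorem~\ref{Thm-BipartiteBondy} can close it, because half-density inside $V(C)$ is fully consistent with everything you have derived. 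The missing ingredient is precisely the theorem of Entringer and Schmeichel \cite{ES88} quoted in the paper: a hamiltonian bipartite graph of order $2\tau\geq 8$ with more than $\tau^2/2$ edges is bipancyclic. The paper's proof of (ii) is your reduction followed by this citation applied to the hamiltonian graph $G[V(C)]$, plus an elementary maximization of $f(\tau)=-\tau^2/2+\tau(n-1)+n-\tfrac12$ over $n-k\leq\tau\leq n$ against the hypothesis $e(G)\geq(n-k-1)n+k+2$. So your skeleton is the right one, but the step you yourself flagged as the ``combinatorial heart'' is a genuine theorem that your greedy $K_{a,a}$-plus-short-chords argument cannot reach, and without it the proposal does not go through.
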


Our proof of Theorem \ref{Thm-balanced} is motivated
by a theorem of Bondy \cite{B71} stated as follows,
which extends the celebrated Erd\H{o}s-Gallai Theorem \cite{EG59}
on cycles.
\begin{thm}[Bondy \cite{B71}]\label{Thm-Bondy}
Let $G$ be a graph on $n$ vertices and $C$ a longest cycle of $G$
with order $c$. Then
$$e(G-C)+e(G-C,C)\leq \frac{(n-c)c}{2}.$$
\end{thm}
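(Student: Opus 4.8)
The plan is to bound the number of edges incident to the off-cycle vertices $R:=V(G)\setminus V(C)$, which is exactly $e(G-C)+e(G-C,C)$, and to show it is at most $\frac{|R|\,c}{2}=\frac{(n-c)c}{2}$. First I would decompose $R$ into the vertex sets $W_1,\dots,W_s$ of the connected components $D_1,\dots,D_s$ of $G[R]$. Since $G[R]$ has no edges between distinct components, and every edge between $R$ and $C$ has its $R$-endpoint in exactly one $W_i$, the quantity to be bounded splits as a sum over components,
$$e(G-C)+e(G-C,C)=\sum_{i=1}^{s}\bigl(e(D_i)+e(D_i,C)\bigr).$$
Because $\sum_i|W_i|=|R|=n-c$, it then suffices to prove the per-component estimate $e(D_i)+e(D_i,C)\le\frac{|W_i|\,c}{2}$ for each $i$ and to sum.

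The base case $|W_i|=1$ already exhibits the mechanism. If $W_i=\{u\}$, then a pair of neighbours of $u$ among two consecutive vertices of $C$ would let us insert $u$ into $C$ and produce a longer cycle; hence the neighbours of $u$ on $C$ contain no two consecutive vertices, so $d_C(u)\le\lfloor c/2\rfloor\le\frac{c}{2}$, which is the claim. For $|W_i|\ge 2$ the naive estimate (combining $e(D_i)\le\binom{|W_i|}{2}$ with $d_C(u)\le c/2$ for each $u$) overshoots by an additive term, so maximality of $C$ must be exploited more globally. Fixing an orientation of $C$ and writing $x^{+}$ for the successor of $x\in V(C)$, I would run the standard rotation/insertion (crossing) argument on the attachment set $A_i:=N_C(W_i)$: if some vertex of $D_i$ is adjacent to $x$ while a vertex of $D_i$ reachable from it inside $D_i$ is adjacent to $x^{+}$, the arc of $C$ across that gap can be rerouted through $D_i$ to yield a strictly longer cycle, contradicting maximality. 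Organising these forbidden configurations along the cyclic order of $A_i$ — each pair of cyclically consecutive attachment vertices must enclose enough internal vertices of $C$ to absorb a connecting path inside $D_i$ — and then charging every edge-incidence at $W_i$ to a distinct ``slot'' of $C$ should deliver the bound $\frac{|W_i|\,c}{2}$.

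The hard part, and the genuine content of the lemma, is precisely this per-component charging step: converting the local ``no insertion lengthens $C$'' constraints into the clean global count $\frac{|W_i|\,c}{2}$ while keeping the charging tight rather than losing constant factors. One must simultaneously control several edges from different vertices of $D_i$ into a common arc together with the internal edges of $D_i$, and guarantee that each unit of edge-incidence is charged to a distinct position of $C$. I would also dispose separately of the degenerate case of a component $D_i$ with no edge to $C$: such a $D_i$ is a full component of $G$ disjoint from $C$, its circumference is at most $c$ by maximality of $C$, and the Erd\H{o}s--Gallai bound (which Theorem \ref{Thm-Bondy} generalizes) then gives $e(D_i)\le\frac{c(|W_i|-1)}{2}<\frac{|W_i|\,c}{2}$, which is even stronger. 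Summing the per-component bounds over $i$ yields $e(G-C)+e(G-C,C)\le\frac{(n-c)c}{2}$, as required.
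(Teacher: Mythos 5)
Your reduction to per-component bounds is valid, and the two cases you actually settle are correct: a singleton component (no two neighbours of $u$ on $C$ are consecutive, so $d_C(u)\le\lfloor c/2\rfloor$), and a component with no edges to $C$ (Erd\H{o}s--Gallai applied to a graph of circumference at most $c$). But for a component $D_i$ with $|W_i|\ge 2$ that does attach to $C$ --- which is the entire substance of the theorem --- you never prove the inequality $e(D_i)+e(D_i,C)\le\frac{|W_i|\,c}{2}$: you describe the insertion-type constraints and then assert that a charging scheme ``should deliver the bound,'' explicitly conceding that this is the hard part. That is a genuine gap, not a routine verification left to the reader. The constraints you extract from maximality of $C$ (no two consecutive attachment vertices; each arc between cyclically consecutive attachments must be at least as long as some connecting path through $D_i$) control only the attachment pattern. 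They say nothing about edges of $D_i$ between vertices remote from all attachments, nor about how many vertices of $D_i$ may be joined to one and the same vertex of $C$; it is exactly the interplay of these three quantities (internal edges, attachment multiplicities, gap lengths) that must be balanced, and no injection of edge-incidences into distinct ``slots'' of $C$ is exhibited. It is doubtful such an injection exists in the naive form you describe, since an internal edge of $D_i$ has no canonical position on $C$ to be charged to.

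Note also that the paper does not prove Theorem \ref{Thm-Bondy} at all --- it is quoted from Bondy --- so the right benchmark is the paper's proof of its bipartite analogue, Theorem \ref{Thm-BipartiteBondy}, whose architecture shows what your missing step actually requires: a minimal counterexample, a reduction to the $2$-connected case via block decompositions, deletion of low-degree off-cycle vertices followed by induction, and maximal-path/fan lemmas to kill the remaining case. Concretely, for Theorem \ref{Thm-Bondy} one can induct on $n$ as follows. If some $v\in V(G)\setminus V(C)$ has $d_G(v)\le c/2$, delete it ($C$ remains a longest cycle of $G-v$) and the bound follows by induction; if some component of $G-C$ sends no edge to $C$, remove it wholesale using Erd\H{o}s--Gallai as you do; otherwise every off-cycle vertex has degree exceeding $c/2$, and (after the block reduction) a fan lemma such as Lemma \ref{LemFYZ} yields, for a vertex $x$ of a component of $G-C$, an $(x,C)$-fan $F$ with $e(F)>c/2$; replacing each arc of $C$ between consecutive feet of $F$ by the two fan paths through $x$ forces $c\ge 2e(F)>c$, a contradiction. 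Some version of this degree-deletion-plus-fan argument (or Bondy's original one) is what must stand in place of your unexecuted charging step; as written, your proposal establishes the theorem only in the degenerate cases.
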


Since $G[C]$ contains at most $\binom{c}{2}$ edges, it can imply Erd\H{o}s-Gallai
Theorem. In fact, Bondy's theorem and its variants turned out to be powerful tools
for tacking many problems on long cycles. For example, it actually plays an
important role in Bollob\'{a}s and Thomason's almost proof \cite{BT99} of Brandt's
conjecture \cite{Br97}, which says that every non-bipartite graph on $n$ vertices
is weakly pancyclic if $e(G)\geq\lfloor\frac{n^2}{4}\rfloor-n+5$. The other example
is that, Bondy's theorem is related to a conjecture of Woodall \cite{W76} in 1976.
Ma and one of authors here \cite{MN} recently proved a stability version of Bondy's
theorem, which is one step towards obtaining a stability version of Woodall's
conjecture \cite{W76}.

Very importantly for us, using Theorem \ref{Thm-Bondy} is an ingenious idea in Bondy's proof of
Tur\'an numbers of large cycles \cite{B71}.
We shall prove a bipartite analog of Theorem \ref{Thm-Bondy} and use it to prove
Theorem \ref{Thm-balanced}.

\begin{thm}\label{Thm-BipartiteBondy}
Let $G=(X,Y;E)$ be a bipartite graph and $C$ a longest cycle of
$G$. Suppose that $|X|=n$, $|Y|=m$, and $|V(C)|=2t$, where $n\geq m\geq t$.
Then there hold:\\
(1) If $m\leq 2t$, then $e(G-C)+e(G-C,C)\leq t(n-1-t)+m$.\\
(2) If $m\geq 2t$, then $e(G-C)+e(G-C,C)\leq t(m+n+1-3t)$.
\end{thm}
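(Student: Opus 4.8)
The plan is to fix a longest cycle $C$ and exploit the bipartite structure of $C$ directly, together with ``insertion'' arguments that are forbidden by the maximality of $C$. Write $X_C=X\cap V(C)$, $Y_C=Y\cap V(C)$, $R_X=X\setminus V(C)$ and $R_Y=Y\setminus V(C)$; since $C$ is an even cycle in a bipartite graph we have $|X_C|=|Y_C|=t$, so $|R_X|=n-t$ and $|R_Y|=m-t$. Orient $C$ and label it $x_1y_1x_2y_2\cdots x_ty_tx_1$ with $x_i\in X$, $y_i\in Y$. The quantity to be bounded is $W:=e(G-C)+e(G-C,C)$, which splits as
\[
W=e(R_X,Y_C)+e(R_Y,X_C)+e(R_X,R_Y),
\]
and, since distinct components of $G-C$ send no edges to one another, the last term decomposes over the components of $G-C$. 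A useful reformulation is that the two target bounds read $t|R_X|+|R_Y|$ in case (1) and $t|R_X|+t|R_Y|-t(t-1)$ in case (2); the threshold $m=2t$ is exactly $|R_Y|=t$, i.e.\ the point at which the number of off-cycle vertices of $Y$ matches $|Y_C|$.

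The engine of the proof is a family of insertion lemmas coming from the maximality of $C$. The two trivial estimates $e(R_X,Y_C)\le t|R_X|$ and $e(R_Y,X_C)\le t|R_Y|$ hold because $|Y_C|=|X_C|=t$. To do better I would record the local configurations that $C$ forbids: if some edge $x_iy_i$ (respectively $y_ix_{i+1}$) of $C$ admitted a length-$3$ detour $x_i,w,u,y_i$ through off-cycle vertices $w\in R_Y$, $u\in R_X$ with $uw\in E$, then rerouting would produce a cycle of length $2t+2$, contradicting that $C$ is longest. More generally, a path through a single component of $G-C$ that reattaches to $C$ across an arc shorter than the path again yields a longer cycle. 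These forbidden patterns are precisely what prevents the three external edge-families from being simultaneously large.

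With these lemmas in hand I would bound $W$ by a charging argument over the oriented cycle. In case (1), where $m\le 2t$ forces $|R_Y|\le t$, I would start from the degree bound $e(R_X,Y_C)\le t|R_X|$ and show that every edge contributing to $e(R_Y,X_C)+e(R_X,R_Y)$ can be charged to a cycle position that the insertion lemmas keep disjoint from those already used, so that only $|R_Y|$ additional units are needed and $W\le t|R_X|+|R_Y|$. In case (2), where $R_Y$ is large, the roles reverse: one starts from $e(R_Y,X_C)\le t|R_Y|$ and controls $e(R_X,Y_C)+e(R_X,R_Y)$, the global saving $t(t-1)$ arising because the successor slots associated with the $t$ edges of $C$ of each parity cannot all be occupied simultaneously. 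Throughout, the dichotomy mirrors the one already present in Jackson's theorem.

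The hard part will be the bookkeeping in the charging step: turning the purely local ``no short detour'' facts into the exact aggregate constants $t|R_X|+|R_Y|$ and $t(m+n+1-3t)$. Two points demand care. First, one must rule out not only the minimal length-$3$ detours but all reattachments of paths through a component across a too-short arc, which requires analysing the attachment set of each component of $G-C$ as a union of intervals on $C$ and showing the induced gaps are large enough. Second, obtaining the precise correction $-t(t-1)$ in case (2), rather than a bound that is merely tight up to a constant, forces a careful accounting that the successor slots charged by $e(R_X,Y_C)$ and by $e(R_X,R_Y)$ do not collide; verifying tightness against $K_{t,n}$ (with $Y_C=Y$, which saturates case (1)) is the sanity check that the constants are correct.
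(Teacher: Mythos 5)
Your setup (the decomposition $W=e(R_X,Y_C)+e(R_Y,X_C)+e(R_X,R_Y)$, the arithmetic identities $t(n-1-t)+m=t|R_X|+|R_Y|$ and $t(m+n+1-3t)=t|R_X|+t|R_Y|-t(t-1)$, and the basic insertion observations) is correct, but what follows is a plan rather than a proof, and the plan has a structural flaw. The entire content of the theorem is the step you defer as ``bookkeeping'': converting local no-detour facts into the exact constants. Your proposed mechanism --- charging each edge of $e(R_Y,X_C)+e(R_X,R_Y)$ to a cycle position kept disjoint from positions already used --- cannot work as stated, because the number of cycle positions is $2t$ while the quantities being charged scale with the sizes of the off-cycle components. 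Concretely, take $G$ to be the disjoint union of $K_{c,c}$ and $K_{N,c}$ with $C$ a longest cycle inside $K_{N,c}$: then $e(R_X,R_Y)=c^2$ alone exceeds $2t=2c$ for $c\geq 3$, yet the bound holds because the $c^2$ internal edges are paid for by the \emph{deficiency} of the $K_{c,c}$-vertices toward $Y_C$. The accounting is therefore a global trade-off between surpluses and deficiencies across components, not a position-by-position charge; this is exactly why the paper proves the theorem by induction on a minimal counterexample (minimizing order, then maximizing edges), where a vertex or pair of low degree is deleted and the recursion $\varrho(a-1,b)+c=\varrho(a,b)$ (and its analogues) absorbs the trade.

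A second concrete gap: your insertion lemmas all assume a detour that enters and leaves $C$ on opposite sides, but the hardest configuration is a component $H$ of $G-C$ whose neighborhood on $C$ lies entirely in one side, say $N_C(H)\subseteq Y_C$ (the paper's Case 4, Subcase 2.2). There, any path through $H$ reattaches to $C$ at two vertices of the same parity, so no simple rerouting of the type you describe produces a longer even cycle, and ruling this case out is where the real work lies: the paper needs its new lemmas on detached maximal disjoint path pairs and good pairs (Lemmas 2.2 and 2.3), combined with Jackson's maximal-path lemma, to show that the arcs of $C$ between consecutive attachment vertices are long enough to force $\ell(C)>2c$. Similarly, the reduction to the 2-connected case (Claim 1) is done by block surgery against the extremal graphs $L_{a,b}^c$, and the remaining cases invoke the Fan lemma and Jackson's theorem on cycles through all of $X$. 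None of these ingredients, or substitutes for them, appear in your outline, so the proposal as it stands does not constitute a proof.
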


The bounds in Theorem \ref{Thm-BipartiteBondy} are tight.
We postpone the discussion to Section 2. Moreover,
Theorem \ref{Thm-BipartiteBondy} generalizes
Theorem \ref{Thm-Jackson}
in the other direction.

Let us digest some notation and terminologies. Let $G=(X,Y;E)$ be a
bipartite graph, where $X,Y$ are two bipartite sets and $E$ is the
edge set of $G$. We say that $G$ is \emph{balanced} if $|X|=|Y|$.
Let $P$ be a path of $G$. We say that $P$ is an $(x,y)$-path if
$x,y$ are two end-vertices of $P$; and $P$ is an $x$-path if $x$ is
one end-vertex of $P$. A graph $G$ is called \emph{weakly pancyclic}
if $G$ contains all cycles of lengths from $g(G)$ to $c(G)$, where
$g(G)$ and $c(G)$ are its girth and circumference, respectively. A
balanced bipartite graph $G$ is called \emph{bipancyclic}, if $G$
contains all cycles of even lengths from 4 to $2|X|$. For a subgraph
$H$ of $G$, we set $X_H=X\cap V(H)$ and $Y_H=Y\cap V(H)$. We use
$|H|$ to denote the order of $H$, that is, $|H|:=|V(H)|$. Let
$S\subseteq V(G)$. We use $G[S]$ to denote the subgraph induced by
$S$, and $G-S$ the subgraph induced by $V(G-S)$. Specially, when
there is no danger of ambiguity, we use $G-H$ instead of $G-V(H)$
sometimes. For $V_1,V_2\subseteq V(G)$ with $V_1\cap
V_2=\emptyset$, we set $e(V_1,V_2)=\{v_1v_2: v_1\in V_1,v_2\in
V_2\}$. If $H_1,H_2$ are two disjoint subgraphs of $G$, then we set
$e(H_1,H_2)=e(V(H_1),V(H_2))$.

The remainder of this paper is organized as follows.
In Section 2, we aim to prove Theorem \ref{Thm-BipartiteBondy}.
In Subsection 2.1, we prove several technical lemmas and
list useful theorems. In Subsection 2.2, we prove Theorem
\ref{Thm-BipartiteBondy}. In Section 3, we prove
Theorem \ref{Thm-balanced} and Theorem \ref{Thm-general}.
\section{Proof of Theorem \ref{Thm-BipartiteBondy}}
\subsection{Preliminaries for proving Theorem \ref{Thm-BipartiteBondy}}
In this subsection, we collect and establish several lemmas to be
used later. Let $G$ be a graph and $P$ be a path of $G$ with the
origin $x$ and terminus $y$. The path $P$ is called a \emph{maximal
path} of $G$, if $N_G(x)\cup N_G(y)\subseteq V(P)$. We say that $P$
is a \emph{maximal $x$-path} of $G$ if $N_G(y)\subseteq V(P)$.

\begin{lem}\label{Lem-specivertexpath}
Let $G=(X,Y;E)$ be a connected bipartite graph and $d(x)\geq d$ for every
vertex $x\in X$.\\
(1) If $|X|\geq |Y|$, then for every vertex $y_0\in Y$, $G$ has a maximal
$y_0$-path with the terminus in $X$ and of order at least $2d$.\\
(2) If $|X|>|Y|$, then for every vertex $x_0\in X$, $G$ has a
maximal $x_0$-path with the terminus in $X$ and of order at least
$2d+1$.
\end{lem}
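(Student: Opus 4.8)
The plan is to separate the two requirements---that the terminus lie in $X$ and that the path be long---and to observe that the length bound follows automatically from the degree hypothesis once the terminus is in $X$. Suppose $P$ is any maximal $y_0$-path whose terminus $x$ lies in $X$. Maximality gives $N_G(x)\subseteq V(P)$, and since $G$ is bipartite we in fact have $N_G(x)\subseteq Y\cap V(P)$, whence $|Y\cap V(P)|\geq d(x)\geq d$. As $P$ runs from $y_0\in Y$ to $x\in X$ it alternates sides and ends on the opposite side from its start, so $|X\cap V(P)|=|Y\cap V(P)|$ and $|V(P)|=2|Y\cap V(P)|\geq 2d$. The identical computation for a maximal $x_0$-path terminating in $X$ (which both starts and ends in $X$) gives $|X\cap V(P)|=|Y\cap V(P)|+1$ and hence $|V(P)|\geq 2d+1$. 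Thus both parts reduce to a purely structural claim: under the stated size hypothesis there is a maximal $y_0$-path (resp.\ $x_0$-path) whose terminus lies in $X$.

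To produce such a path I would run a depth-first search from the prescribed root and exploit the standard fact that in an undirected DFS every non-tree edge joins a vertex to one of its ancestors. Let $T$ be the resulting DFS tree, which is spanning because $G$ is connected. If $\ell$ is any leaf of $T$, then every neighbour of $\ell$ in $G$ is either its parent or one of its ancestors, so all of $N_G(\ell)$ lies on the tree-path from the root to $\ell$; that is, the root-to-$\ell$ path is a maximal root-path whose terminus is $\ell$. Consequently, if no maximal $y_0$-path (resp.\ $x_0$-path) terminated in $X$, then every leaf of $T$ would lie in $Y$.

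The key step is then a one-line injection argument. Suppose all leaves of $T$ lie in $Y$. Then every vertex of $X$ is internal in $T$ and so has at least one child, necessarily in $Y$; assign to each $x\in X$ one such child $c(x)\in Y$. The map $c$ is injective, since in a tree every vertex has a unique parent and the parent of $c(x)$ is $x$. In part~(1) the root $y_0\in Y$ has no parent and hence is not of the form $c(x)$, so $c$ embeds $X$ into $Y\setminus\{y_0\}$, forcing $|X|\leq|Y|-1$ and contradicting $|X|\geq|Y|$. In part~(2) the root $x_0\in X$, so $c$ merely embeds $X$ into $Y$ and yields $|X|\leq|Y|$, contradicting $|X|>|Y|$. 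Note that the side on which the root sits matches exactly the non-strict versus strict size hypothesis, which is why the two parts need $|X|\geq|Y|$ and $|X|>|Y|$ respectively. Hence a maximal root-path ending in $X$ must exist, and by the first paragraph it has the required order.

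The main obstacle, and the reason a naive approach fails, is that one cannot simply take a longest $y_0$-path and hope it ends in $X$: the longest such path may terminate in $Y$ while the desired terminus-in-$X$ path is strictly shorter (think of a low-degree vertex of $X$ dangling from the middle of a long path), and rotations applied to a single longest path do not reach such a vertex. The DFS viewpoint circumvents this by supplying, all at once, a whole family of maximal root-paths---one per leaf---together with a global tree on which the counting inequality can be read off directly. The only technical content is verifying the DFS ancestor property and the injectivity of the child-choice map, and both are routine.
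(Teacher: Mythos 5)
Your proof is correct, but it takes a genuinely different route from the paper's. The paper establishes existence of the required maximal path by a mutual induction on $|V(G)|$: it deletes the root ($y_0$ in part (1), $x_0$ in part (2)), uses the cardinality hypothesis to find a component $H$ of the remaining graph in which the appropriate inequality between $|X_H|$ and $|Y_H|$ persists ($|X_H|>|Y_H|$ after deleting $y_0$, $|X_H|\geq|Y_H|$ after deleting $x_0$), applies the \emph{other} part of the lemma inductively inside $H$ to a neighbour of the deleted root, and prepends the root; maximality in $G$ is preserved because the only vertex of $G-H$ adjacent to $H$ is the root itself, which lies on the path. You instead build a depth-first search tree $T$ rooted at the prescribed vertex, use the standard ancestor property of undirected DFS to conclude that every root-to-leaf path of $T$ is a maximal root-path, and then rule out the bad case (all leaves in $Y$) by the child-assignment injection $c\colon X\to Y$, which misses $y_0$ in part (1) and so contradicts $|X|\geq|Y|$, respectively contradicts $|X|>|Y|$ in part (2). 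Both arguments finish with the identical computation converting the degree hypothesis on the terminus into the order bound, so the only real difference is how the terminus-in-$X$ path is produced. Your approach is non-inductive, yields a whole family of maximal root-paths at once (one per leaf), and makes transparent why the two parts need the non-strict versus strict inequality (the root occupies, or does not occupy, a slot on the $Y$ side of the injection); the paper's approach stays entirely within elementary connectivity and component language, and its delete-a-vertex-and-examine-components scheme is reused essentially verbatim in the subsequent lemmas of the paper (Lemmas 2.2 and 2.3), which is presumably why the authors set it up this way.
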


\begin{proof}
We first show the existence of a maximal $x_0$- or $y_0$-path with
the terminus in $X$. We use induction on $n:=|V(G)|$. The assertion is trivial if
$n=1,2$. Suppose that $n\geq 3$.

First assume $|X|\geq |Y|$ and let $y_0\in Y$.
Thus, $|X|>|Y\backslash\{y_0\}|$. It follows that
there is a component $H$ of $G-y_0$ (possibly $H=G-y_0$) such that
$|X_H|>|Y_H|$. Since $G$ is connected, $y_0$ has a neighbor $x_0\in
X_H$. By the induction hypothesis, $H$ has a maximal $x_0$-path
$P_0$ with the terminus in $X_H\subseteq X$. Thus, $P=y_0x_0P_0$ is a
maximal $y_0$-path with the terminus in $X$.

Now assume $|X|>|Y|$ and let $x_0\in X$. Thus, $|X\backslash\{x_0\}|\geq|Y|$.
It follows that there is a component $H$ of $G-x_0$ (possibly $H=G-x_0$)
such that $|X_H|\geq|Y_H|$. Since $G$ is connected, $x_0$ has a neighbor
$y_0\in Y_H$. By the induction hypothesis, $H$ has a maximal
$y_0$-path $P_0$ with the terminus in $X_H\subseteq X$. Thus,
$P=x_0y_0P_0$ is a maximal $x_0$-path with the terminus in $X$.

It remains to show that the path $P$ is of order at least $2d$
(if $P$ originates at $y_0$), or at least $2d+1$ (if $P$ originates at $x_0$).
Let $x_1\in X$ be the terminus of $X$ other than $x_0$ or $y_0$. Notice that
$d(x_1)\geq d$ and $N(x_1)\subseteq V(P)$. We have $|Y\cap V(P)|\geq d$,
implying that $P$ has order at least $2d$ when $P$ originates at $y_0$, and at
least $2d+1$ when $P$ originates at $x_0$. This proves Lemma \ref{Lem-specivertexpath}.
\end{proof}

For a graph $G$ and $S\subseteq V(G)$, we denote by $\rho_G(S)$
the number of edges in $G$ which are incident to at least one vertex in $S$, that is,
$$\rho_G(S):=e(G[S])+e_G(S,V(G)\backslash S).$$ From this definition, one
can see $d(v)=\rho_G(\{v\})$ for any vertex $v\in V(G)$. When there
is no danger of ambiguity, we use $\rho(u,v)$ and $\rho(S)$ instead
of $\rho_G(\{u,v\})$ and $\rho_G(S)$, respectively. An
$\{s,s'\}$-\emph{disjoint path pair} of $G$ (or shortly, an
$\{s,s'\}$-\emph{DPP}), is the union of an $s$-path and an $s'$-path
which are vertex-disjoint. Let $D$ be an $\{s,s'\}$-DPP, and $t,t'$
be the termini of the two paths in $D$. We say that $D$ is a
\emph{maximal $\{s,s'\}$-DPP} in $G$ if $N_G(t)\cup N_G(t')\subseteq
V(D)$. Clearly, $D$ is a maximal $\{s,s'\}$-DPP of $G$, if and only
if $D+ss'$ is a maximal path of $G+ss'$. For a special case that $G$
is bipartite, we say that $D$ is \emph{detached} if $t$ and $t'$ are
in distinct partition sets of $G$.

Next we shall prove two lemmas on degree conditions
for detached maximal DDP in bipartite graphs.
\begin{lem}\label{Lem-goodmaximalpair}
Let $G=(X,Y;E)$ be a connected balanced bipartite graph. If
$\rho(x,y)\geq \rho$ for every $(x,y)\in(X,Y)$, then for any
$(x_0,y_0)\in(X,Y)$, $G$ has a detached maximal $\{x_0,y_0\}$-DPP of
order at least $\rho+1$.
\end{lem}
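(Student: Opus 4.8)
The plan is to separate the two assertions implicit in the statement: (a) the mere existence of a detached maximal $\{x_0,y_0\}$-DPP, and (b) the order estimate for any such object. Part (b) is the easy half and I would dispose of it first, in the spirit of the last paragraph of the proof of Lemma \ref{Lem-specivertexpath}. Let $D$ be any detached maximal $\{x_0,y_0\}$-DPP, and let $a$ and $b$ be its two termini; since $D$ is detached, exactly one of them lies in $X$ and one in $Y$, say $a\in X_D$ and $b\in Y_D$. Maximality gives $N(a)\cup N(b)\subseteq V(D)$, and bipartiteness then forces $N(a)\subseteq Y_D$ and $N(b)\subseteq X_D$. Applying the hypothesis to the pair $(a,b)\in(X,Y)$ yields $\rho\le\rho(a,b)=d(a)+d(b)-\mathbf{1}[ab\in E]$. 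If $ab\in E$ I bound $d(a)\le|Y_D|$ and $d(b)\le|X_D|$, so $\rho\le|X_D|+|Y_D|-1=|V(D)|-1$; if $ab\notin E$ then $b\in Y_D\setminus N(a)$ and $a\in X_D\setminus N(b)$ give $d(a)\le|Y_D|-1$ and $d(b)\le|X_D|-1$, whence $\rho\le|V(D)|-2$. In either case $|V(D)|\ge\rho+1$, as required.

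It therefore remains to prove the purely structural fact that a connected balanced bipartite graph admits, for every choice of $x_0\in X$ and $y_0\in Y$, a detached maximal $\{x_0,y_0\}$-DPP; note that this part uses neither the number $\rho$ nor any degree hypothesis. I would prove it by induction on $|V(G)|$, imitating the proof of Lemma \ref{Lem-specivertexpath}. The governing observation is a parity dictionary: since $G$ is bipartite, a DPP $D$ is detached exactly when $|V(D)|$ is even (equivalently, when $D+x_0y_0$ is a path of even order). To run the induction I would delete $x_0$; the graph $G-x_0$ then has exactly one more vertex in $Y$ than in $X$, so at least one of its components is ``$Y$-heavy''. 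Using that $x_0$ sends an edge into every component of $G-x_0$, I would place the $y_0$-path inside the component $H_0\ni y_0$ and start the $x_0$-path along an edge from $x_0$ into a different, suitably chosen component; when the two paths live in distinct components their disjointness is automatic. Within each component I would invoke Lemma \ref{Lem-specivertexpath} (or its $X\leftrightarrow Y$ dual) to produce a maximal subpath whose terminus is steered into a prescribed part, the direction of steering being dictated by that component's imbalance. The total imbalance being exactly $1$ is what guarantees the components of the two required types are available, and this is precisely what makes the two termini land in different parts, i.e.\ makes $D$ detached.

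The step I expect to be the real obstacle is reconciling maximality with detachment, because these two demands pull against each other through parity: extending a path by one vertex flips the partition class of its terminus, so one cannot simply lengthen a path to make it maximal without possibly destroying detachment (and a maximum-order DPP, although automatically maximal, may have odd order and hence fail to be detached). Concretely, the difficult configuration is when $G-x_0$ is connected, so that the two paths cannot be separated into different components; by the symmetry $x_0\leftrightarrow y_0$, $X\leftrightarrow Y$ one may also delete $y_0$ instead, but both deletions may leave a connected remainder. In that case I would first extract one maximal path (say a $y_0$-path ending in the prescribed part via Lemma \ref{Lem-specivertexpath}), and then either take the trivial $x_0$-path $\{x_0\}$ when all neighbours of $x_0$ have already been captured, or recurse on the component of the remaining graph that contains an uncaptured neighbour of $x_0$, threading the $x_0$-path through it while keeping the parity bookkeeping that secures detachment. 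Managing this bookkeeping cleanly, so that the final pair is simultaneously disjoint, maximal and detached, is the crux of the argument.
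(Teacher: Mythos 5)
Your treatment of the order estimate is correct and coincides with the paper's closing paragraph: the termini of a detached maximal DPP form a pair in $(X,Y)$, maximality traps their neighbourhoods inside $D$, and the two cases $ab\in E$, $ab\notin E$ give $|V(D)|\geq\rho+1$. Your parity dictionary (detached $\Leftrightarrow$ even order) is also right. The gap is in the existence half, which is the substantive part of the lemma, and you flag it yourself: the case where $G-x_0$ is connected is left as ``the crux'' with no actual argument. This is not merely an unfinished detail; the recursion you outline genuinely stalls there. Since $G-x_0$ has one more vertex in $Y$ than in $X$ and $y_0$ lies in the larger part, Lemma \ref{Lem-specivertexpath} can only hand you a maximal $y_0$-path $P$ with terminus in $Y$, so detachment forces the $x_0$-path to end in $X$. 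If some neighbour $y_1$ of $x_0$ lies off $P$, you must produce a maximal $y_1$-path ending in $X_H$ inside the component $H$ of $G-x_0-V(P)$ that contains $y_1$; Lemma \ref{Lem-specivertexpath} supplies that only when $|X_H|\geq|Y_H|$, and although $G-x_0-V(P)$ is balanced in total, the particular component $H$ can be $Y$-heavy, in which case every maximal $y_1$-path in $H$ ends in $Y_H$ and detachment is lost. A concrete instance: take $X=\{x_0,\dots,x_4\}$, $Y=\{y_0,\dots,y_4\}$ with edges $x_0y_0$, $x_0y_1$, $x_1y_0$, $x_1y_2$, $x_2y_0$, $x_2y_1$, $x_2y_3$, $x_3y_0$, $x_3y_4$, $x_4y_4$. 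Here $G-x_0$ is connected, $P=y_0x_1y_2$ is a maximal $y_0$-path ending in $Y$, the uncaptured neighbour $y_1$ sits in the $Y$-heavy component $\{y_1,x_2,y_3\}$ of $G-x_0-V(P)$, and every maximal $y_1$-path there ends in $Y$. A detached maximal $\{x_0,y_0\}$-DPP does exist, namely $(y_0x_3y_4x_4)\cup(x_0y_1x_2y_3)$, but it requires the $y_0$-path to end in $X$ --- the opposite of what your steering produces --- and you give no mechanism for revising the choice of $P$.

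The paper avoids this trap by never reducing the connected case to single maximal paths: it deletes both $x_0$ and $y_0$ at once, so that $G'=G-\{x_0,y_0\}$ is again balanced, and it runs the induction on the DPP statement itself, applying the lemma inductively to a balanced component of $G'$ adjacent to both $x_0$ and $y_0$, and falling back on Lemma \ref{Lem-specivertexpath} only for components of $G'$ whose imbalance (or attachment to just one of $x_0,y_0$) already points the two termini in compatible directions; this is exactly the classification into the families $\mathcal{H}_1$ and $\mathcal{H}_2$ in the paper's proof. In the example above this two-vertex deletion immediately yields the correct pairing: $\{x_3,y_4,x_4\}$ (an $X$-heavy component met by $y_0$) and $\{x_2,y_1,y_3\}$ (a $Y$-heavy component met by $x_0$). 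To salvage a one-vertex-deletion argument you would need an induction hypothesis strictly stronger than what Lemma \ref{Lem-specivertexpath} gives; as written, your proposal is incomplete on precisely the case that carries the difficulty.
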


\begin{proof}
We first show the existence of the detached maximal
$\{x_0,y_0\}$-DPP by induction on $n:=|V(G)|$. The assertion is
trivial if $n=2$. So assume that $n\geq 4$. Let $x_0\in X,y_0\in
Y$ and let $G':=G-\{x_0,y_0\}$.

First assume that there is a balanced component $H$ of $G'$ that is
incident to both $x_0$ and $y_0$. Let $x_1,y_1\in V(H)$ be the
neighbors of $y_0$ and $x_0$, respectively. By the induction
hypothesis, $H$ has a detached maximal $\{x_1,y_1\}$-DPP, say $D_0$.
Thus, $D=D_0\cup\{x_0y_1,x_1y_0\}$ is a detached maximal
$\{x_0,y_0\}$-DPP of $G$.

Now assume that every balanced component of $G'$ is incident to
either $x_0$ or $y_0$ but not both. Let $\mathcal{H}_1$ be the set
of components $H$ of $G'$ such that either $|X_H|>|Y_H|$, or $H$ is
balanced and incident to $x_0$. Let $\mathcal{H}_2$ be the set of
components $H$ of $G'$ such that either $|Y_H|>|X_H|$, or $H$ is
balanced and incident to $y_0$.

If $y_0$ is not incident to any component of $G'$, then every
component of $G'$ is incident to $x_0$. This fact implies that
$\mathcal{H}_1\neq\emptyset$. Let $H\in\mathcal{H}_1$ and $y_1\in
N_H(x_0)$. By Lemma \ref{Lem-specivertexpath}(1), $H$ has a maximal
$y_1$-path $P_1$ with the terminus in $X_H\subseteq X$. Thus, the union
of the path $P_x=x_0y_1P_1$ and the trivial path $P_y=y_0$ is a
detached maximal $\{x_0,y_0\}$-DPP of $G$, and we are done.

In the following, we assume $y_0$ is incident to at least one
component of $G'$; and similarly, by symmetry, $x_0$ is incident
to at least one component of $G'$.

If $\mathcal{H}_2=\emptyset$, then every component of $G'$ is
balanced, and it follows that $y_0$ is not incident to any component
of $G'$, a contradiction. So $\mathcal{H}_2\neq\emptyset$, and
similarly, $\mathcal{H}_1\neq\emptyset$.

It follows that there exist $H_1\in\mathcal{H}_1$ and
$H_2\in\mathcal{H}_2$ such that: $x_0$ is incident to one of
$H_1$ and $H_2$, and $y_0$ is incident to the other. If $x_0$ is incident
to $H_1$ and $y_0$ is incident to $H_2$, then let $y_1\in
N_{H_1}(x_0)$ and $x_1\in N_{H_2}(y_0)$. Recall that
$|X_{H_1}|\geq|Y_{H_1}|$ and $|Y_{H_2}|\geq|X_{H_2}|$. By Lemma
\ref{Lem-specivertexpath}, $H_1$ has a maximal $y_1$-path $P_1$ with the
terminus in $X_{H_1}\subseteq X$ and $H_2$ has a maximal $x_1$-path
$P_2$ with the terminus in $Y_{H_2}\subseteq Y$. Thus, the union of the
two paths $P_x=x_0y_1P_1$ and $P_y=y_0x_1P_2$ is a detached maximal
$\{x_0,y_0\}$-DPP of $G$. If $x_0$ is incident to $H_2$ and $y_0$ is
incident to $H_1$, then let $y_1\in N_{H_2}(x_0)$ and $x_1\in
N_{H_1}(y_0)$. Note that in this case $|X_{H_1}|>|Y_{H_1}|$ and
$|Y_{H_2}|>|X_{H_2}|$. By Lemma \ref{Lem-specivertexpath}, $H_2$ has
a maximal $y_1$-path $P_1$ with the terminus in $Y_{H_2}\subseteq Y$, and
$H_1$ has a maximal $y_1$-path $P_2$ with the terminus in
$X_{H_1}\subseteq X$. Thus, the union of the two paths
$P_x=x_0y_1P_1$ and $P_y=y_0x_1P_2$ is a detached maximal
$\{x_0,y_0\}$-DPP of $G$. This proves the existence of the detached
maximal $\{x_0,y_0\}$-DPP of $G$.

Now let $D$ be a detached maximal $\{x_0,y_0\}$-DPP of $G$. We will
show that $D$ has order at least $\rho+1$. Let $x_1\in X,y_1\in Y$
be the termini of the two paths in $D$. Obviously, we have
$$\rho(x_1,y_1)=\left\{\begin{array}{ll}
  d(x_1)+d(y_1),    & x_1y_1\notin E(G);\\
  d(x_1)+d(y_1)-1,  & x_1y_1\in E(G).
\end{array}\right.$$
If $x_1y_1\notin E(G)$, then $|V(D)|\geq d(x_1)+d(y_1)+2\geq\rho+1$;
if $x_1y_1\in E(G)$, then $|V(D)|\geq d(x_1)+d(y_1)\geq\rho+1$. This
proves Lemma \ref{Lem-goodmaximalpair}.
\end{proof}

Let $G$ be a graph with connectivity 1, and $u,v\in V(G)$. We call
$\{u,v\}$ a \emph{good pair} of $G$, if there is an end-block $B$ of
$G$ such that exactly one of $u,v$ is an inner-vertex of $B$.

\begin{lem}\label{LeGoodPair}
Let $G=(X,Y;E)$ be a balanced bipartite graph with connectivity 1,
and $\{x_0,x'_0\}\subseteq X$ be a good pair of $G$. Suppose
$\rho(x,y)\geq \rho$ for every $(x,y)\in (X,Y)$. Then $G$ has a
detached maximal $\{x_0,x'_0\}$-DPP of order at least $\rho+1$.
\end{lem}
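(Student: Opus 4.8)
The plan is to reduce Lemma~\ref{LeGoodPair} to the balanced-connected case already handled in Lemma~\ref{Lem-goodmaximalpair} by exploiting the block structure guaranteed by the good pair. Since $G$ has connectivity $1$, the hypothesis that $\{x_0,x_0'\}$ is a good pair gives an end-block $B$ containing exactly one of $x_0,x_0'$ as an inner-vertex; without loss of generality say $x_0'$ is the inner-vertex of $B$ and $x_0\notin V(B)$ or $x_0$ is the unique cut-vertex attaching $B$ to the rest of $G$. Let $w$ be the cut-vertex separating $B$ from $G-(V(B)\setminus\{w\})$. The idea is to build the $x_0'$-path entirely inside $B$ (or starting in $B$) using Lemma~\ref{Lem-specivertexpath}, while the $x_0$-path runs through the remainder of $G$, and then argue the two together form a detached maximal $\{x_0,x_0'\}$-DPP whose terminus degrees force order at least $\rho+1$.

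First I would fix notation for the block decomposition: let $B$ be the prescribed end-block with cut-vertex $w$, and let $G_0=G-(V(B)\setminus\{w\})$ be the rest of the graph, so $G=B\cup G_0$ with $V(B)\cap V(G_0)=\{w\}$. The natural split is to route one of the two paths to terminate deep inside $B$ and the other to terminate inside $G_0$, guaranteeing the termini lie in opposite blocks and hence that maximality can be verified block-locally. The key point is that a maximal path (in the sense $N_G(\text{terminus})\subseteq V(\text{path})$) terminating at an inner-vertex of $B$ only needs to capture the neighborhood \emph{within} $B$, because inner-vertices of an end-block have all their neighbors inside that block; symmetrically for $G_0$. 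This lets me invoke Lemma~\ref{Lem-specivertexpath}, which produces maximal $x$- or $y$-paths of controlled order inside a connected bipartite graph with a minimum-degree-type bound, applied separately to $B$ and to $G_0$.

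Then I would run the order estimate exactly as in Lemma~\ref{Lem-goodmaximalpair}: once a detached maximal $\{x_0,x_0'\}$-DPP $D$ is constructed, its two termini $x_1,y_1$ lie in distinct partition classes, so
$$
\rho(x_1,y_1)=\begin{cases} d(x_1)+d(y_1), & x_1y_1\notin E(G),\\ d(x_1)+d(y_1)-1, & x_1y_1\in E(G),\end{cases}
$$
and since every vertex of $D$ on each path is an edge-endpoint counted toward $|V(D)|$, the maximality forces $N_G(x_1)\cup N_G(y_1)\subseteq V(D)$, giving $|V(D)|\ge\rho+1$ by the same two-case check ($+2$ when nonadjacent, $+0$ when adjacent). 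Crucially the $\rho$-bound is a hypothesis on \emph{every} $(x,y)\in(X,Y)$, so it applies verbatim to the pair of termini regardless of which blocks produced them.

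The main obstacle I anticipate is the bookkeeping needed to guarantee the DPP is genuinely \emph{detached} and that the two sub-paths glue at $w$ without collision. Because $B$ is an end-block, the parities of $|X_B|$ versus $|Y_B|$ (and likewise for $G_0$) control which of Lemma~\ref{Lem-specivertexpath}(1) or (2) I may apply, and I must ensure the path inside $B$ can start at $x_0'$ (an inner-vertex) and end in the partition class opposite to the $G_0$-path's terminus. The delicate case is when $x_0$ itself equals or is adjacent to the cut-vertex $w$, where one of the two paths may have to traverse $w$; here I expect to need a careful case split mirroring the $\mathcal{H}_1,\mathcal{H}_2$ dichotomy of the previous lemma, checking in each case that the inner-vertex terminus captures its full $G$-neighborhood (so maximality in $G$, not merely in $B$, holds) and that the detached condition on the termini is met. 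Once those parity/adjacency cases are dispatched, the order bound is immediate from the computation above.
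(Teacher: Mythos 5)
Your order-count at the end is fine (it is the same termini-degree argument the paper uses), but the existence part of your plan has a genuine gap, and it is exactly where the real difficulty of Lemma~\ref{LeGoodPair} lies. Your fixed decomposition $G=B\cup G_0$ at the cut-vertex $w$ of $B$ cannot always produce a \emph{detached} DPP, because Lemma~\ref{Lem-specivertexpath} gives you no choice of terminus class: starting from a vertex of $X$, the terminus is forced into $X$ when that piece has strictly more $X$- than $Y$-vertices, and forced into $Y$ otherwise. Now suppose $w\in Y$ and $B$ is balanced. Since $G$ is balanced, counting gives $|X_{G_0}|=|Y_{G_0}|-1$. Then the maximal $x_0'$-path in $B$ is forced to end in $Y_B$ (as $|Y_B|\geq |X_B|$), and the maximal $x_0$-path in $G_0$ is forced to end in $Y_{G_0}$ (as $|Y_{G_0}|>|X_{G_0}|$): both termini lie in $Y$, so no case split within your scheme can make the pair detached. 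The natural escape is to work in $B-w$ instead (there $|X_{B-w}|>|Y_{B-w}|$, so the terminus can land in $X$), but then the terminus may be adjacent to $w$, and maximality in $G$ requires $w$ to lie on the \emph{other} path, which Lemma~\ref{Lem-specivertexpath} applied to $G_0$ cannot guarantee; this is the same place where your claim that maximality ``can be verified block-locally'' breaks down, and where disjointness of the two paths at $w$ also becomes an issue. You cannot repair this with Lemma~\ref{Lem-goodmaximalpair} either, since that lemma needs the two prescribed vertices in \emph{opposite} classes, whereas $x_0,x_0'\in X$ --- that asymmetry is precisely what makes the present lemma harder.

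The missing idea is that the paper proves Lemma~\ref{LeGoodPair} by induction on $|V(G)|$, with the lemma itself invoked recursively, and it does not cut at $w$. It cuts either at $x_0$ (if $x_0$ is a cut-vertex) or at a neighbor $y_0$ of $x_0$ chosen farthest from the cut-vertex $u$ of $B$, and then looks at the component $H$ of the remaining graph containing $x_0'$. When $H$ is unbalanced, two applications of Lemma~\ref{Lem-specivertexpath} suffice, much as you envision; when $H$ is balanced and is entered from $y_0\in Y$, Lemma~\ref{Lem-goodmaximalpair} suffices; but when $H$ is balanced and the entry vertex $x_1\in N_H(y_0)$ lies in $X$ (the same class as $x_0'$), the subproblem is literally an instance of Lemma~\ref{LeGoodPair} for $H$ --- the paper checks that $B$ is still an end-block of $H$, so $\{x_1,x_0'\}$ is a good pair of $H$, and applies the induction hypothesis. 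No combination of Lemmas~\ref{Lem-specivertexpath} and~\ref{Lem-goodmaximalpair} alone replaces that recursive step, so a proof organized as a one-shot reduction to those two lemmas, as yours is, cannot be completed.
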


\begin{proof}
We prove the assertion by induction on $|V(G)|$. It is easy to check
that the assertion is true for $n=4$. Now assume that $n\geq 6$. If
$G$ has a detached maximal $\{x_0,x'_0\}$-DPP, say $D$, then $D$ has
order at least $\rho+1$ (see the last paragraph of the proof of
Lemma \ref{Lem-goodmaximalpair}), and the statement holds. Now we
assume that $G$ has no detached maximal $\{x_0,x'_0\}$-DPP. Let $B$
be an end-block of $G$ that contains one vertex, say $x'_0$, as an
inner-vertex, and let $u$ be the cut-vertex of $G$ contained in $B$.
If $x_0$ is also an inner-vertex of an end-blocks, say $B_0$, then
we assume without loss of generality that $|V(B)|\leq|V(B_0)|$.

Suppose first that $x_0$ is a cut-vertex of $G$. Let $H$ be the
component of $G-x_0$ containing $x'_0$.

If $H$ is balanced, then let $y_0$ be a neighbor of $x_0$ in $H$. By
Lemma \ref{Lem-goodmaximalpair}, $H$ contains an detached maximal
$(x'_0,y_0)$-DPP, say $D$. It follows that $D\cup\{x_0y_0\}$ is an
detached maximal $(x_0,x'_0)$-DPP of $G$, a contradiction. So we
assume that $H$ is not balanced.

If $|X_H|>|Y_H|$, then $|X_{G-H}|<|Y_{G-H}|$. By Lemma
\ref{Lem-specivertexpath}, $H$ has a maximal $x'_0$-path with
terminus in $X_H$ and $G-H$ has a maximal $x_0$-path with terminus
in $Y_{G-H}$. Thus the union of such two paths form an detached
maximal $(x_0,x'_0)$-DPP of $G$, a contradiction. If $|X_H|<|Y_H|$,
then $|X_{G-H}|>|Y_{G-H}|$. By Lemma \ref{Lem-specivertexpath}, $H$
has a maximal $x'_0$-path with terminus in $Y_H$ and $G-H$ has a
maximal $x_0$-path with terminus in $X_{G-H}$. Thus the union of
such two paths form a detached maximal $(x_0,x'_0)$-DPP of $G$, also
a contradiction.

Now we assume that $G'=G-x_0$ is connected. Specially we have
$x_0\in V(G-B)$. Here we deal with the case that $N(x_0)=\{u\}$. For
this case $x_0$ is an inner-vertex of the end-block $B_0$ with
$V(B_0)=\{x_0,u\}$. It follows that $V(B)=\{x'_0,u\}$. By Lemma
\ref{Lem-specivertexpath}, $G-\{x_0,x'_0\}$ has a maximal $u$-path
$P$ with terminus in $Y$. Now the two paths $P_1=x_0uP$ and
$P_2=x'_0$ form a detached maximal $(x_0,x'_0)$-DPP of $G$, a
contradiction. So we assume that $x_0$ has a neighbor $y_0\in
V(G-B)$. We choose $y_0$ such that the distance between $y_0$ and
$u$ in $G$ is as large as possible. It follows that $B$ is an
end-block of $G'$ as well.

Let $H$ be the component of $G'-y_0$ containing $x'_0$. So $B$ is
contained in $H$.

We claim that $B$ is an end-block of $H$ as well. Suppose not. Then
$B=H$. This implies that $N_H(y_0)=\{u\}$, specially $u\in X$. If
$x_0$ has a second neighbor $y_1$, then the distance between $u$ and
$y_1$ is larger than that between $u$ and $y_0$, a contradiction. It
follows that $N(x_0)=\{y_0\}$. Now $x_0$ is an inner-vertex of the
end-block $B_0$ with $V(B_0)=\{x_0,y_0\}$, which contradicting our
choice of $B$. Thus as we claimed, $B$ is an end-block of $H$ as
well.

If $H$ is balanced, then let $x_1=N_H(y_0)$. Since $y_0\in V(G-B)$,
we have that $\{x_1,x'_0\}$ is a good pair of $H$. By induction, $H$
has a detached maximal $(x_1,x'_0)$-DPP $D$. Now
$D\cup\{x_0y_0,y_0x_1\}$ is a detached maximal $(x_1,x'_0)$-DPP of
$G$, a contradiction. So we assume that $H$ is not balanced.

If $|X_H|>|Y_H|$, then $|X_{G-H}|<|Y_{G-H}|$. By Lemma
\ref{Lem-specivertexpath}, $H$ has a maximal $x'_0$-path $P_1$ with
terminus in $X_H$ and $G-H$ has a maximal $y_0$-path $P_2$ with
terminus in $Y_{G-H}$. If $|X_H|<|Y_H|$, then
$|X_{G-H}|\geq|Y_{G-H}|$. By Lemma \ref{Lem-specivertexpath}, $H$
has a maximal $x'_0$-path $P_1$ with terminus in $Y_H$ and $G-H$ has
a maximal $y_0$-path with terminus in $X_{G-H}$. In both cases, the
two paths $P_1$ and $x_0y_0P_2$ form a detached maximal
$(x_0,x'_0)$-DPP of $G$, a contradiction.
\end{proof}

\begin{lem}[Jackson \cite{J85}]\label{ThJa}
Let $G=(X,Y,E)$ be a 2-connected bipartite graph, where
$|X|\geq|Y|$. If each vertex of $X$ has degree at least $k$, and
each vertex of $Y$ has degree at least $l$, then $G$ contains a
cycle of length at least $2\min\{|Y|,k+l-1,2k-2\}$. Moreover, if
$k=l$ and $|X|=|Y|$, then $G$ contains a cycle of length at least
$2\min\{|Y|,2k-1\}$.
\end{lem}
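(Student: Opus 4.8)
The plan is to argue via a longest cycle together with the rotation--exchange method, using the maximal-path tool of Lemma \ref{Lem-specivertexpath} to supply long detours through the vertices lying off the cycle. Let $C$ be a longest cycle of $G$, say with $|V(C)|=2c$ (the two colour classes meet $C$ equally, since $C$ is an even cycle), and fix the target $r:=\min\{|Y|,k+l-1,2k-2\}$; the goal is to show $c\ge r$. If $V(C)\supseteq Y$ then, as $C$ is balanced, $c=|Y|\ge r$ and we are done; in particular this disposes of the case where $C$ is a Hamilton cycle. So assume some vertex of $Y$, hence some vertex of $G$, lies off $C$, and let $H$ be a component of $G-C$.

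Next I would exploit $2$-connectivity to attach $H$ to $C$ in two disjoint ways. By Menger's theorem there are two vertex-disjoint paths from $H$ to $C$ ending at distinct vertices $s_1,s_2$ of $C$ and meeting $C$ only there; let their endpoints inside $H$ be $h_1,h_2$. Applying Lemma \ref{Lem-specivertexpath} inside $H$ to the root $h_1$ (and, when $h_1\ne h_2$, routing through $H$ between $h_1$ and $h_2$) produces a path $R$ through $H$ joining $s_1$ to $s_2$ whose interior lies in $H$ and which is long: its order is bounded below in terms of the degree parameters, because the terminus of a maximal path sees all of its neighbours inside that path. Replacing the $s_1$--$s_2$ arc of $C$ by $R$ yields a cycle, so maximality of $C$ forces the bypassed arc to be at least as long as $R$. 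This exchange is the engine converting ``$H$ is nonempty with high degrees'' into a lower bound on $c$.

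The quantitative step is a degree count along $C$. Orient $C$, and for an attachment vertex $s$ of $H$ consider its successor $s^{+}$ and predecessor $s^{-}$. A standard consequence of $C$ being longest is that the successors (resp.\ predecessors) of the attachment vertices, together with the neighbours of $h_1,h_2$, cannot be ``crossed'' by a chord or by an $H$-detour without lengthening $C$; translating the hypotheses $d(x)\ge k$ for $x\in X$ and $d(y)\ge l$ for $y\in Y$ into forbidden chords and forbidden detours, and counting how many vertices of each colour class these neighbourhoods are forced to occupy on the short arc, yields $c\ge k+l-1$ and $c\ge 2k-2$. The two regimes arise according to whether the relevant path-endpoints lie in the same colour class (both in $X$, giving $2k-2$) or in different classes (giving $k+l-1$), i.e.\ according to the parity of the detour. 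Combined with the settled case $c=|Y|$, this gives $c\ge r$.

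Finally, for the sharpening when $k=l$ and $|X|=|Y|$, I would rerun the count while tracking parity: balance of the two colour classes removes one unit of slack in the exchange, so the bypassed arc cannot be exactly as long as $R$ without creating an even cycle of the forbidden length, upgrading $2k-2$ to $2k-1$. The main obstacle I anticipate is the bookkeeping in the exchange/rotation step: one must arrange the two disjoint $H$--$C$ attachments and the maximal detour so that the recombined cycle is genuinely longer unless $s_1,s_2$ sit on a short arc, and then show the degree hypotheses cannot all be accommodated on that arc. Keeping the bipartite parity straight throughout --- which colour class each successor, predecessor, and path-endpoint belongs to --- is precisely what separates the clean bound $k+l-1$ from the cruder one and is where the argument is most error-prone.
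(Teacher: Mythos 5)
The paper does not prove this lemma at all: it is imported verbatim from Jackson \cite{J85}, where it is the main theorem of a full-length paper, proved with its own apparatus of maximal-path lemmas (one of which this paper separately imports as Lemma \ref{Lem-Jacksonmaximalpath}). So your sketch must stand on its own, and it does not: the two steps that constitute the actual content of the theorem are asserted rather than proved. First, your use of Lemma \ref{Lem-specivertexpath} inside a component $H$ of $G-C$ is unjustified on two counts: (a) that lemma needs a minimum-degree hypothesis \emph{inside} $H$, whereas the hypotheses $d(x)\geq k$, $d(y)\geq l$ concern degrees in $G$ --- a vertex of $H$ may have almost all of its neighbours on $C$, so its degree in $H$ can be $1$, and the lemma then yields nothing; and (b) the lemma produces a maximal path from one prescribed root, not a path between two prescribed vertices $h_1,h_2$, so your ``routing through $H$ between $h_1$ and $h_2$'' with a length guarantee has no source ($H$ need not be $2$-connected --- it can be a tree --- so nothing like Lemma \ref{LePathOrder} applies either, and for a path whose \emph{both} ends attach to $C$ the maximality-based degree count at a terminus is unavailable). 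Exactly this tension between neighbours inside $H$ and attachments on $C$ is where all the difficulty of Jackson's theorem lies.

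Second, the quantitative heart of your argument --- ``translating the hypotheses into forbidden chords and forbidden detours \dots\ yields $c\geq k+l-1$ and $c\geq 2k-2$'' --- is precisely the statement to be proved, and you give no mechanism for it: no definition of the forbidden configurations, no count, and no explanation of why the answer is $2k-2$ rather than $2k-1$ or $k+l$. The same applies to the refinement to $2\min\{|Y|,2k-1\}$ in the balanced case, where ``removes one unit of slack'' is not an argument. A workable route along these lines would have to construct a \emph{maximal path of $G$} (not merely of $H$) whose two end-vertices have controlled degree sum, and then convert that path into a long cycle; that conversion is exactly what Jackson's Lemma \ref{Lem-Jacksonmaximalpath} provides, and it is how the present paper itself deploys these tools in the proof of Theorem \ref{Thm-BipartiteBondy}. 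Your sketch never makes that conversion, so the gap between ``long detour through $H$'' and ``long cycle'' remains open.
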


\begin{lem}[Bagga, Varma \cite{BaVa}]\label{ThBaVa}
Let $G=(X,Y,E)$ be a balanced bipartite graph of order $2n$. If
$d(x)+d(y)\geq n+2$ for every $(x,y)\in(X,Y)$, then $G$ is
Hamilton-biconnected.
\end{lem}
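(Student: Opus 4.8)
The plan is to prove the equivalent statement that $G$ is \emph{Hamilton-laceable}: for every $x_0\in X$ and $y_0\in Y$ there is a Hamiltonian $(x_0,y_0)$-path. (In a balanced bipartite graph on $2n$ vertices a spanning path necessarily has its two ends in different parts, so this is the only sensible meaning of ``Hamilton-biconnected''.) I would establish this through a Bondy--Chv\'atal-type \emph{bipartite closure} argument. Call a pair $(x,y)\in(X,Y)$ \emph{full} if $d(x)+d(y)\geq n+2$. The key reduction is a closure lemma: if $x\in X$ and $y\in Y$ are non-adjacent and $d(x)+d(y)\geq n+2$, then $G$ is Hamilton-laceable if and only if $G+xy$ is. The ``only if'' direction is immediate, since adding edges cannot destroy a spanning path; the substance is the converse.

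For the converse, suppose $G+xy$ is Hamilton-laceable and fix $x_0\in X$, $y_0\in Y$. Take a Hamiltonian $(x_0,y_0)$-path $Q=u_1u_2\cdots u_{2n}$ of $G+xy$, with $u_1=x_0$ and $u_{2n}=y_0$; if $Q$ avoids the edge $xy$ it already lies in $G$ and we are done, so assume $xy=u_iu_{i+1}$ with, say, $u_i=x$ and $u_{i+1}=y$. I would eliminate this edge by a \emph{segment reversal}: if there is an even position $p$ (so $u_p\in Y$) with $u_p\in N_G(x)$ and $u_{p+1}\in N_G(y)$, then reversing the sub-path of $Q$ lying between $u_i$ and $u_p$ replaces the two edges $u_iu_{i+1}=xy$ and $u_pu_{p+1}$ by the two genuine $G$-edges $xu_p$ and $yu_{p+1}$, leaving $x_0,y_0$ as the ends and producing a Hamiltonian $(x_0,y_0)$-path lying entirely in $G$. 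It remains to find such a $p$. If none exists, then the $d_G(x)$ successors $\{p+1:u_p\in N_G(x)\}$, which are odd positions, are disjoint from the $d_G(y)$ odd positions occupied by $N_G(y)$; since there are only $n$ odd positions in total (and after discarding the at most one boundary position $p=2n$), this forces $d_G(x)+d_G(y)\leq n+1$, contradicting fullness. Hence a valid reversal exists, completing the closure lemma.

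With the closure lemma in hand, the theorem follows quickly. Under the hypothesis every pair $(x,y)\in(X,Y)$ is full, and adding edges only increases degrees, so fullness is preserved throughout; thus I may add non-edges one at a time until the graph becomes the complete bipartite graph $K_{n,n}$, at each step preserving Hamilton-laceability by the closure lemma. Since $K_{n,n}$ is trivially Hamilton-laceable, running the equivalences backwards shows $G$ itself is Hamilton-laceable, which is the assertion.

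The main obstacle is the closure lemma, and within it the bookkeeping of the crossover count: one must check the several boundary situations (for instance when the new edge $xy$ is incident to an endpoint $x_0$ or $y_0$, when an $x$-neighbour sits at the last position $2n$, or when $p$ is adjacent to $i$) and verify in each that the removed edge $u_pu_{p+1}$ is never the fictitious edge $xy$ again, so that the rerouted path genuinely lives in $G$. Since $xy\notin E(G)$, one has $x\notin N_G(y)$ and $y\notin N_G(x)$, which rules out the degenerate positions $p=i-1$ and $p=i+1$; the remaining corrections only tighten the counting inequality, so the threshold $n+2$ is exactly what is needed. It is nevertheless here that the care is required.
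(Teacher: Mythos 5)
The paper offers no internal proof of this lemma to compare against: it is imported as a known theorem of Bagga and Varma \cite{BaVa} and used as a black box in the proof of Lemma \ref{LePathOrder}. Your closure argument is, however, correct and self-contained, so it is a genuine alternative to citing the literature. Your reading of ``Hamilton-biconnected'' as Hamilton-laceable is the right one (a spanning path of a balanced bipartite graph necessarily has one end in each part, and the paper's only application of the lemma is precisely to extract a Hamiltonian $(y_1,x_2)$-path with ends in different parts). The crossover count is also right, and the threshold $n+2$ is exactly what it needs: writing $Q=u_1\cdots u_{2n}$ with $u_1=x_0\in X$, neighbours of $x$ sit at even positions and neighbours of $y$ at odd ones; if no even $p\leq 2n-2$ satisfies $u_p\in N_G(x)$ and $u_{p+1}\in N_G(y)$, then the at least $d_G(x)-1$ in-range successor positions $\{p+1:\ u_p\in N_G(x),\ p\leq 2n-2\}$ and the $d_G(y)$ positions of $N_G(y)$ are disjoint subsets of the $n$ odd positions, forcing $d_G(x)+d_G(y)\leq n+1$, contradicting the hypothesis. (The loss of $1$ coming from the possible neighbour $u_{2n}$ of $x$ is what pushes the threshold from the Hamiltonicity value $n+1$ up to $n+2$, and $n+2$ is sharp: if one vertex of $X$ has degree $1$, the graph cannot be Hamilton-laceable even though all pair sums may reach $n+1$.) The degenerate positions $p=i-1,i+1$ are excluded automatically because $xy\notin E(G)$, as you observe, and the segment reversal yields a path with the same two ends whether $u_pu_{p+1}$ lies before or after $xy$ on $Q$. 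Two small points for a final write-up: the case you dismiss with ``say,'' namely $u_i=y$ and $u_{i+1}=x$, should be handled by reading $Q$ from $y_0$, which swaps the parities assigned to the two parts but changes nothing else; and the iteration to $K_{n,n}$ should note explicitly that adding edges only increases degree sums, so every non-edge of every intermediate graph still has pair sum at least $n+2$, making each closure step legitimate.
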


\begin{lem}\label{LePathOrder}
Let $G$ be a 2-connected balanced bipartite graph such that
$\rho(x,y)\geq \rho$ for every $(x,y)\in (X,Y)$. Then for any two
vertices $x_1,x_2\in X$, $G$ has an $(x_1,x_2)$-path of order at
least $\rho$.
\end{lem}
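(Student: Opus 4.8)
The plan is to argue by contradiction using a longest $(x_1,x_2)$-path. Fix $x_1,x_2\in X$ and let $P=u_0u_1\cdots u_p$ be a longest $(x_1,x_2)$-path with $u_0=x_1$ and $u_p=x_2$. Since both endpoints lie in $X$, the path has even length and odd order $p+1$, so in a balanced graph it cannot be spanning and $p+1\le 2n-1$. Suppose for contradiction that $p+1<\rho$. The role of the hypothesis is recorded first: since $\rho(x,y)=d(x)+d(y)-[xy\in E]$, the assumption $\rho(x,y)\ge\rho$ gives $d(x)+d(y)\ge\rho$ for every $(x,y)\in(X,Y)$. The goal is to show that if $P$ were this short, then some vertex of $X\setminus V(P)$ or some $Y$-vertex would have so many neighbours clustered on $P$ that a strictly longer $(x_1,x_2)$-path could be assembled.

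The engine is the interaction of $2$-connectivity with the extremality of $P$. If $V(G)\setminus V(P)\ne\emptyset$, choose a component $R$ of $G-V(P)$; by $2$-connectivity (Menger's theorem) there are two internally disjoint $R$--$P$ paths meeting $P$ at distinct vertices $u_a,u_b$ with $a<b$. Rerouting $P$ through $R$ between $u_a$ and $u_b$ yields another $(x_1,x_2)$-path, which is longer unless the attachment pattern is obstructed; a case analysis of the obstructed configurations (in the spirit of the maximal-path arguments behind Lemma~\ref{Lem-specivertexpath}) forces the neighbours of the relevant off-path and on-path vertices to concentrate on $V(P)$. Translating this concentration into an edge count and inserting $d(x)+d(y)\ge\rho$ then forces $|V(P)|\ge\rho$, the desired contradiction. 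The extremal regimes are dispatched by the tools already assembled: when the degrees are large enough that $d(x)+d(y)\ge n+2$ holds, $G$ is Hamilton-biconnected by Lemma~\ref{ThBaVa} and a near-spanning $(x_1,x_2)$-path is immediate, while Lemma~\ref{ThJa} supplies a long cycle that can be opened into a path in the remaining small-$\rho$ range.

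The main obstacle is the rerouting step with \emph{both} endpoints frozen: unlike the one-free-endpoint situation of Lemma~\ref{Lem-specivertexpath}, here I cannot grow $P$ at $x_1$ or $x_2$, so every lengthening must be an internal reroute, and the bipartite parity (a vertex of $X$ has neighbours only among the $Y$-vertices of $P$, and so cannot be spliced between two consecutive path vertices) eliminates the easiest insertions. Making the list of admissible reroutes exhaustive, and checking that each blocked configuration genuinely pins enough neighbours onto $V(P)$ to trigger the degree bound, is where the real work lies. A cleaner packaging I would try in parallel is to adjoin a vertex $w$ in the $Y$-side adjacent to $x_1$ and $x_2$: since $w$ has degree $2$, the graph $G+w$ remains $2$-connected and an $(x_1,x_2)$-path of order $s$ in $G$ corresponds exactly to a cycle through $w$ of length $s+1$ in $G+w$. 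The task then becomes producing a long cycle through a prescribed vertex, where the same rotation machinery applies but the two fixed chords $wx_1,wx_2$ make the bookkeeping more uniform.
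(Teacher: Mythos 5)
Your proposal is a plan, not a proof: the step that carries all the weight is exactly the one you leave undone. You reduce everything to ``a case analysis of the obstructed configurations'' that ``forces the neighbours \ldots to concentrate on $V(P)$,'' and then to an edge count that ``forces $|V(P)|\ge\rho$,'' while conceding that making this exhaustive ``is where the real work lies.'' Three concrete reasons this is not a routine verification. First, your framing that a reroute through a component $R$ of $G-V(P)$ ``is longer unless the attachment pattern is obstructed'' is backwards: by bipartite parity the attachment vertices $u_a,u_b$ of even a single vertex of $R$ satisfy $b\ge a+2$, so the detour deletes at least as many internal vertices of $P$ as it inserts; being non-lengthening is the \emph{generic} situation, so essentially every configuration lands in the unanalysed case. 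Second, the hypothesis $\rho(x,y)\ge\rho$ applies only to pairs $(x,y)\in(X,Y)$ with one vertex on each side, but both of your frozen endpoints $x_1,x_2$ lie in $X$; there is no pair of path ends to which the hypothesis applies directly, so the promised degree-sum trigger has no starting point and would have to be manufactured through $Y$-vertices of $P$ in a way you never specify. Third, your fallback that Lemma~\ref{ThJa} ``supplies a long cycle that can be opened into a path'' does not produce an $(x_1,x_2)$-path: the cycle need not pass through $x_1$ or $x_2$. The alternative packaging via an added vertex $w$ inherits all of these problems --- a long cycle through a prescribed vertex is itself a fan-type theorem you would have to prove, and $G+w$ is unbalanced and violates the pair hypothesis since $d(w)=2$.

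The paper's proof avoids rotation entirely, and it promotes exactly the tools you relegated to side remarks into the whole argument. Apply Lemma~\ref{ThJa} once: since $k+l\ge\rho$ for the minimum degrees $k,l$ on the two sides, $G$ has a cycle of length at least $2\min\{|X|,\rho-1\}$. Then split on $|X|$. If $|X|\ge\rho-1$, take a cycle $C$ with $|C|\ge2\rho-2$ and use $2$-connectivity (Menger) to route two disjoint paths from $x_1,x_2$ to $C$, meeting it at $u_1,u_2$; the two arcs of $C$ between $u_1$ and $u_2$ have orders summing to $|C|+2\ge2\rho$, so one arc has order at least $\rho$, and concatenation finishes. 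If $|X|=\rho-2$, the cycle is Hamiltonian; either one of the two $(x_1,x_2)$-arcs already has order at least $\rho$, or both have order exactly $\rho-1$, in which case a single edge between their interiors yields the desired path and the absence of all such edges contradicts $\rho(x,y)\ge\rho$. If $|X|\le\rho-3$, delete $x_1$ together with one vertex of $Y$ and apply Lemma~\ref{ThBaVa} to conclude the remainder is Hamilton-biconnected; prepending the edge $x_1y_1$ to a Hamiltonian $(y_1,x_2)$-path gives a path of order $|V(G)|-1\ge\rho$. If you want to salvage your write-up, drop the rerouting engine and execute this three-case argument in full.
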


\begin{proof}
The assertion can be checked easily when $|X|=2$. So we assume that
$|X|\geq 3$.

Set $k=\min\{d(x): x\in X\}$ and $l=\min\{d(y): y\in Y\}$. It
follows that $k+l=d(x)+d(y)\geq\rho(x,y)\geq\rho$ for some
$(x,y)\in(X,Y)$. If $k\neq l$, then $2k-2\geq\rho-1$ or
$2l-2\geq\rho-1$; if $k=l$, then we have $2k-1\geq\rho-1$. Notice
that $|X|=|Y|$. By Lemma \ref{ThJa}, $G$ has a cycle of length at
least $2\min\{|X|,\rho-1\}$.

Suppose first that $|X|\geq\rho-1$. It follows that $G$ has a cycle
$C$ of length at least $2\rho-2$. Since $G$ is 2-connected, there
are two disjoint paths $P_1,P_2$ between $x_1,x_2$, respectively,
and $C$. Let $u_1,u_2\in V(C)$ be the terminus of $P_1,P_2$
(possibly $x_i=u_i$ for some $i=1,2$). Now one of the two paths
$\overrightarrow{C}[u_1,u_2]$ and $\overleftarrow{C}[u_1,u_2]$ has
order at least $\rho$. Together with $P_1,P_2$, we can get an
$(x_1,x_2)$-path of order at least $\rho$.

Secondly, we suppose that $|X|=\rho-2$. It follows that $G$ has a
Hamiltonian cycle $C$. If one of the two paths
$P_1=\overrightarrow{C}[x_1,x_2]$ and
$P_2=\overleftarrow{C}[x_1,x_2]$ has order at least $\rho$, then
there are noting to prove. So we assume that both $P_1$ and $P_2$
has order exactly $\rho-1$. If there is an edge, say $u_1u_2$, with
$u_i\in V(P_i)\backslash\{x_1,x_2\}$, then one of the paths
$x_1P_1u_1u_2P_2x_2$ and $x_1P_2u_2u_1P_1x_2$ has order at least
$\rho$ (notice that the sum of the orders of such two paths is
$2\rho$), and we are done. Now we assume that there are no edges
between $V(P_1)\backslash\{x_1,x_2\}$ and
$V(P_2)\backslash\{x_1,x_2\}$. It follows that for any two vertices
$(x,y)\in(X\backslash\{x_1,x_2\},Y)$, $d(x)+d(y)\leq\rho-1$, a
contradiction.

Lastly, we suppose that $|X|\leq\rho-3$. Let $y_1\in N(x_1)$, $y_2\in
Y\backslash\{y_1\}$, and set $G'=G-\{x_1,y_2\}$. Now $G'$ is a
balanced bipartite graph of order $2(\rho-4)$ and for every
$(x,y)\in(X_{G'},Y_{G'})$,
$d_{G'}(x)+d_{G'}(y)\geq\rho_{G'}(x,y)\geq\rho-2$. By Lemma
\ref{ThBaVa}, $G'$ is Hamilton-biconnected. Let $P'$ be a
Hamiltonian $(y_1,x_2)$-path of $G'$. Then $P=x_1y_1P'$ is an
$(x_1,x_2)$-path of order $|V(G)|-1$. Notice that if $G$ is complete and
bipartite, then $\rho\leq|V(G)|-1$; otherwise
$\rho\leq|V(G)|-2$. It follows that $P$ is an $(x_1,x_2)$-path of
order at least $\rho$.
\end{proof}

A subgraph $F$ of a graph $G$ is called an \emph{$(x,L)$-fan} if $F$
has the following decomposition $F=\cup_{i=1}^kP_i$, where
\begin{itemize}
\item $k\geq 2$;
\item each $P_i$ is a path with two end-vertices $x$ and $y_i\in V(L)$;
\item $V(P_i)\cap V(L)=\{y_i\}$ and $V(P_i)\cap V(P_j)=\{x\}$.
\end{itemize}
The following lemma on the fan structure is a corollary of a theorem on weighted
graphs, which was proved by Fujisawa, Yoshimoto, and Zhang (see \cite[Lemma~1]{FYZ05}).
We need this refined version of the Fan Lemma to find a long cycle.
\begin{lem}\label{LemFYZ}
Let $G$ be a 2-connected graph, $C$ a longest cycle $G$, and $H$ a
component of $G-C$. If $d(v)\geq d$ for every $v\in V(H)$, then for
every vertex $x\in V(H)$, there is an $(x,C)$-fan $F$ with $e(F)\geq
d$.
\end{lem}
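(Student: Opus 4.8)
The plan is to derive Lemma \ref{LemFYZ} from the weighted Fan Lemma of Fujisawa, Yoshimoto and Zhang (\cite[Lemma~1]{FYZ05}) by specializing to unit weights. First I would record the structural consequence of the hypotheses: since $H$ is a component of $G-C$, every edge of $G$ incident with a vertex of $H$ has its other end in $V(H)\cup V(C)$; in particular, for each $v\in V(H)$ all of its $d(v)\geq d$ incident edges stay inside $V(H)\cup V(C)$. Assigning weight $1$ to every edge of $G$, the weighted degree of each vertex equals its ordinary degree, so the minimum weighted degree over $V(H)$ is at least $d$. The FYZ lemma then produces an $(x,C)$-fan $F$ whose total weight, which under unit weights is exactly $e(F)$, is at least this minimum weighted degree, giving $e(F)\geq d$. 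Two-connectivity is used only to guarantee (via Menger's theorem) that at least two internally disjoint $x$--$C$ paths exist, so that a genuine fan with $k\geq 2$ is available to optimize over.

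For a self-contained argument avoiding the citation, I would run an extremal exchange argument directly. Among all $(x,C)$-fans choose one, $F$, with $e(F)$ as large as possible, breaking ties by a secondary measure (for instance lexicographically minimizing the distances from $x$ to the feet). Write $W=V(F)\cap V(H)$ and let $y_1,\dots,y_k$ be the feet on $C$. Because the legs of a fan meet only at $x$, the subgraph $F$ is a spider centered at $x$: it is a tree in which every vertex of $W\setminus\{x\}$ has degree $2$ and the leaves are precisely the feet, so $e(F)=|W|+k-1$. The goal is then to show that the $\geq d$ edges at a suitably chosen vertex of $W$ are all \emph{charged} to $F$, forcing $|W|+k-1\geq d$.

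The key step is a collection of local exchanges at the tips of the legs. Let $w$ be the last $H$-vertex on some leg $P_i$, so that $w$ is adjacent to the foot $y_i$ (or $w=x$ if $P_i=xy_i$). I would argue: (i) if $w=x$ and $x$ has a neighbour $z\in V(C)\setminus\{y_1,\dots,y_k\}$, then $F\cup\{xz\}$ is a strictly larger fan, a contradiction, so every $C$-neighbour of $x$ is already a foot; and (ii) more generally one shows, using maximality together with the secondary tie-breaker, that no vertex of $W$ can have a neighbour lying outside $V(F)$. Granting (ii), every neighbour of the chosen $w$ lies in $W\cup\{y_1,\dots,y_k\}=V(F)$, whence $d\leq d(w)\leq |W|-1+k=e(F)$, as required.

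The hard part will be step (ii): ruling out a neighbour $z$ of a fan-vertex $w$ with $z\notin V(F)$. When $z\in V(C)$ is a non-foot but $w\neq x$, appending $wz$ would force two legs to share the initial segment from $x$ to $w$, so the natural move only reroutes the leg $P_i$ to the new foot $z$ without increasing $e(F)$; when $z\in V(H)\setminus W$, one cannot simply append $z$ since a leg must terminate on $C$, and a path from $z$ to $C$ internally disjoint from the rest of $F$ need not be visible. These weight-preserving (rather than weight-increasing) moves are exactly the subtle point: one must combine them with the secondary optimization to reach a contradiction, and it is precisely this delicate exchange analysis that is carried out in \cite[Lemma~1]{FYZ05} in the weighted setting. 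For this reason I would, in the paper itself, simply invoke the FYZ lemma with unit weights as in the first paragraph.
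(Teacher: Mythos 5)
Your primary argument---specializing the weighted fan lemma of Fujisawa, Yoshimoto, and Zhang to unit weights, so that weighted degree becomes ordinary degree and the fan's weight becomes $e(F)$---is exactly how the paper obtains this lemma, which it states as a corollary of \cite[Lemma~1]{FYZ05} without further proof. Your self-contained exchange sketch is, as you yourself note, incomplete at step (ii), but since you explicitly fall back on the citation, the proposal matches the paper's approach and is correct.
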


We also need the following two results on long cycles in bipartite graphs
due to Jackson \cite{J81,J85}.
\begin{lem}\rm ({Jackson \cite[Lemma~5]{J85}})\label{Lem-Jacksonmaximalpath}
Let $G=(X,Y;E)$ be a 2-connected bipartite graph. Let $P$ be a maximal
path in $G$ with two end-vertices $u$ and $v$.\\
(i) If $u\in X$ and $v\in Y$, then $G$ contains a cycle of length at
least $\min\{|V(P)|,2(d(u)+d(v)-1)\}$.\\
(ii) If $u,v\in X$ then $G$ contains a cycle of length at least
$\min\{|V(P)|-1,2(d(u)+d(v)-2)\}$.
\end{lem}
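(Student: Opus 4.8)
The plan is to prove both parts by a single chord-crossing (rotation-type) analysis, letting bipartite parity account for the two different constants. Write $P=v_0v_1\cdots v_p$ with $u=v_0$, $v=v_p$ and $|V(P)|=p+1$; along $P$ the vertices alternate between $X$ and $Y$. Since $P$ is maximal, $N(u)\cup N(v)\subseteq V(P)$, so I record the neighbor positions $A=\{i:v_i\in N(u)\}$ and $B=\{i:v_i\in N(v)\}$, with $|A|=d(u)$ and $|B|=d(v)$. In case (i) ($u\in X$, $v\in Y$) the index $p$ is odd, $A$ consists of odd indices and $B$ of even indices; in case (ii) ($u,v\in X$) the index $p$ is even and both $A,B$ consist of odd indices. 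Keeping track of these parities is what separates the bound $|V(P)|$ from $|V(P)|-1$, and the bound $2(d(u)+d(v)-1)$ from $2(d(u)+d(v)-2)$.

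The core is the crossing construction: given $i\in A$ and $j\in B$ with $j<i$, the two chords $uv_i$ and $vv_j$ together with the path close into the cycle $v_0\cdots v_j v_p\cdots v_i v_0$, which has order $p+2-(i-j)$ and omits exactly the vertices $v_{j+1},\dots,v_{i-1}$. In case (i) the smallest admissible gap is $i-j=1$ ($i$ odd, $j$ even), giving a cycle on all of $V(P)$ of even length $|V(P)|$; in case (ii) parity forces $i-j$ even, so the smallest gap is $2$, giving an even cycle of length $p=|V(P)|-1$ that omits the single vertex $v_{j+1}$. A pigeonhole argument decides whether such a minimal-gap crossing exists: in case (i) the absence of a gap-$1$ crossing means $A$ and $B+1$ are disjoint subsets of the $(p+1)/2$ odd indices, whence $d(u)+d(v)\le |V(P)|/2$; consequently $2(d(u)+d(v)-1)\le |V(P)|$, so the target minimum equals the degree term. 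Thus whenever the minimal-gap crossing is present we already have a cycle meeting the bound, and otherwise the degree budget $d(u)+d(v)\le |V(P)|/2$ (respectively its case-(ii) analogue) is available for the remaining work.

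In the remaining \emph{sparse} regime I would first note that the one-ended cycles always do part of the job: the chord $uv_{\max A}$ yields a cycle of order $\max A+1\ge 2d(u)$, and symmetrically the chord $vv_{\min B}$ yields one of order $p-\min B+1\ge 2d(v)$ (using that the $d(u)$, resp.\ $d(v)$, neighbor positions are spread at least $2$ apart). Hence there is always a cycle of length $\ge 2\max\{d(u),d(v)\}$, which already settles case (i) when $\min\{d(u),d(v)\}=1$ and case (ii) when $\min\{d(u),d(v)\}\le 2$. When both endpoints have larger degree I must genuinely merge the two ends, i.e.\ produce a crossing pair $j\in B$, $i\in A$ with $j<i$ whose gap $i-j$ is small enough that $p+2-(i-j)\ge 2(d(u)+d(v)-1)$; the degree budget makes this equivalent to bounding the smallest such gap, and the spreads of $A$ and $B$ supply the estimate so long as the two neighborhoods overlap in position.

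The step I expect to be the main obstacle is precisely the \emph{nested} configuration in which every neighbor of one endpoint lies beyond every neighbor of the other, that is $\max A<\min B$, for then no crossing pair exists at all and the one-ended cycles are too short. This is where $2$-connectivity is indispensable: since $d(u),d(v)\ge 2$ here, no vertex in the gap between $v_{\max A}$ and $v_{\min B}$ can separate $P$, so $2$-connectivity forces an interior chord crossing that gap, and combining it with the two endpoint chords produces the required long cycle. Verifying the length bound in this configuration, together with the parity bookkeeping that converts $-1$ into $-2$ in case (ii), is the delicate part; the rest is the routine counting sketched above.
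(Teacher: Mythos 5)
First, a point of reference: the paper does not prove this lemma at all --- it is quoted, with proof omitted, as Lemma~5 of Jackson \cite{J85} --- so there is no in-paper argument to compare against and your proposal must stand on its own. Its framework is sound as far as it goes: the crossing cycle of order $p+2-(i-j)$, the parity pigeonhole showing that absence of a gap-$1$ crossing forces $d(u)+d(v)\le |V(P)|/2$ (so the degree bound becomes the operative term of the minimum), and the one-ended cycles of orders $\max A+1\ge 2d(u)$ and $p-\min B+1\ge 2d(v)$ are all correct. But everything past that is a plan rather than a proof, and the two configurations you defer are precisely where the content of the lemma lies.

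Concretely, there are two gaps. (a) In the overlapping regime, ``the spreads of $A$ and $B$ supply the estimate'' is not an argument, and the naive spread bounds genuinely do not suffice: writing $g$ for the minimum crossing gap, they give only the three cycle orders $p+2-g$, $\max A+1\ge 2d(u)+g-1$ and $p-\min B+1\ge 2d(v)+g-1$, and these inequalities are simultaneously compatible with all three cycles being shorter than $2(d(u)+d(v)-1)$ once $\min\{d(u),d(v)\}\ge 4$ (take $g$ between $p+5-2(d(u)+d(v))$ and $2\min\{d(u),d(v)\}-2$). The case can be closed, but only by a finer count --- e.g.\ grouping the elements of $B$ below $\max A$ by the least element of $A$ above them, so that the resulting pairwise disjoint $A$-free intervals each swallow at least $(g-1)/2$ odd positions of $[\min B,\max A]$, which yields $p\ge 2(d(u)+d(v))+g-4$ and hence that the minimal-gap crossing cycle alone has order at least $2(d(u)+d(v)-1)$ --- and nothing of this kind appears in your sketch. (b) More seriously, the nested case $\max A<\min B$ is both unproven and wrong as stated. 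Two-connectivity does not ``force an interior chord crossing that gap''; it forces only a path around any single vertex, and that path may attach to the middle segment of $P$ or run through vertices off $P$, in which case your splice reuses vertices. Even granting a genuine chord $v_cv_d$ with $c<\max A<\min B<d$, the spliced cycle $v_0\cdots v_c\,v_d\cdots v_p\,v_j\cdots v_i\,v_0$ has order $p+1-(i-c-1)-(d-j-1)$, where $i$ is the least element of $A$ above $c$ and $j$ the largest element of $B$ below $d$, so one still has to control the two loss terms by a further counting argument exploiting $\max A<\min B$, together with a case analysis on where the connection attaches; a naive merge of the two end-cycles along arbitrary connections retains only about half of each, i.e.\ order roughly $d(u)+d(v)$, not $2(d(u)+d(v)-1)$. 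This nested configuration is the heart of Jackson's lemma --- it is exactly where $2$-connectivity earns its keep --- and you explicitly concede it is unverified, as is the parity bookkeeping that produces the constant $-2$ in case (ii). As it stands, the proposal establishes only the easy half of the statement.
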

The final lemma was originally conjectured by Sheehan (see \cite[pp.332]{J81}).
\begin{lem}{\rm (Jackson \cite[Theorem~1]{J81})}\label{Lem_Cyclecontaining}
Let $G=(X,Y;E)$ be a bipartite graph with $|X|\leq |Y|$. If $d(x)\geq\max\{|X|,|Y|/2+1\}$ for every
vertex $x\in X$, then $G$ has a cycle containing all vertices in $X$.
\end{lem}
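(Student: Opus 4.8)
The plan is to produce a cycle of length exactly $2m$ meeting every vertex of $X$, by an extremal (longest-configuration) argument driven by one elementary but powerful consequence of the degree hypothesis. Write $|X|=m\le n=|Y|$ and set $\delta=\max\{m,n/2+1\}$. First I would record the \emph{common-neighbour property} (P): for any two distinct $x,x'\in X$,
$$|N(x)\cap N(x')|\ge d(x)+d(x')-|Y|\ge 2\left(\tfrac n2+1\right)-n=2.$$
This single fact does a lot of work. It yields the connectivity I need: no vertex of $Y$ can separate two vertices of $X$ (at least one of their common neighbours survives its deletion), and a cut-vertex in $X$ can only detach pendant vertices of $Y$ that play no role; so after deleting irrelevant $Y$-vertices I may assume $G$ is $2$-connected with $d(x)\ge\delta$ on $X$.

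Next I would run the extremal step. Among all cycles of $G$, choose $C$ maximizing $s:=|X\cap V(C)|$ and, subject to that, maximizing $|V(C)|$. Since $C$ is bipartite it has exactly $s$ vertices of each side, say $C=x_1y_1x_2y_2\cdots x_sy_s$ with $y_i$ joining $x_i,x_{i+1}$. Suppose for contradiction $s<m$ and fix an uncovered vertex $x_0\in X\setminus V(C)$. Because $d(x_0)\ge m>s=|Y\cap V(C)|$, the vertex $x_0$ has at least $m-s\ge 1$ neighbours off $C$; this is exactly where the bound $d\ge m$ (rather than only $d\ge n/2+1$) is used. The goal is now to enlarge the $X$-coverage to $s+1$, contradicting the choice of $C$.

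The engine is two \emph{insertion moves}, both exploiting (P) to supply off-cycle common neighbours. Let $T_i=(N(x_0)\cap N(x_i))\setminus V(C)$. The \emph{arc move} at a consecutive pair $x_i,x_{i+1}$ replaces the sub-path $x_iy_ix_{i+1}$ by $x_i\,y'\,x_0\,y''\,x_{i+1}$ with distinct $y'\in T_i$, $y''\in T_{i+1}$; the \emph{edge move} at an on-cycle neighbour $y_j\in N(x_0)$ replaces the edge $x_jy_j$ (or $x_{j+1}y_j$) by $x_j\,y'\,x_0\,y_j$ with $y'\in T_j$. Either move produces a cycle covering $(X\cap V(C))\cup\{x_0\}$, the desired contradiction. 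I would then check that these moves cannot both fail unless the configuration is highly saturated: property (P) forces $|T_i|\ge 2$ whenever neither $y_{i-1}$ nor $y_i$ is adjacent to $x_0$, so the arc move already disposes of the case where $x_0$ has few neighbours on $C$ (in particular the case $N(x_0)\cap V(C)=\emptyset$, where every $|T_i|\ge2$). Tracking exactly when both moves fail drives one to the case where $x_0$ is adjacent to \emph{all} of $y_1,\dots,y_s$ while every off-cycle neighbour of $x_0$ is attached only to uncovered vertices of $X$.

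The main obstacle is precisely this residual ``saturated'' configuration, which no single local swap defeats. Here I would argue globally: the off-cycle neighbours of $x_0$ together with the other uncovered $X$-vertices (each again satisfying (P) and $d\ge\delta$) form a subgraph that, by $2$-connectivity and a fan/rotation argument in the spirit of Lemma \ref{LemFYZ}, can be spliced into $C$ to either lengthen it or raise its $X$-coverage, contradicting maximality. Making this last step quantitative, so that $\delta\ge n/2+1$ exactly forecloses the saturated case, is the delicate part and is where the full strength of the hypothesis (as opposed to the cruder $d\ge m$) is consumed. Once the saturated case is excluded, $s<m$ is impossible, so $C$ meets all of $X$ and has length $2m$, proving the lemma.
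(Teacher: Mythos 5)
The paper does not prove this lemma at all: it is quoted as Jackson's Theorem~1 from \cite{J81} (a statement that was Sheehan's conjecture before Jackson settled it), so your proposal has to stand on its own as a complete proof, and it does not. The decisive gap is the claim that ``tracking exactly when both moves fail drives one to the case where $x_0$ is adjacent to all of $y_1,\dots,y_s$.'' That implication is false, because your sets $T_i$ can be emptied by \emph{chords} of $C$ rather than by edges from $x_0$ to the cycle: a common neighbour of $x_0$ and $x_i$ guaranteed by (P) may lie on $C$ without being one of the two cycle-neighbours $y_{i-1},y_i$ of $x_i$. Concretely, take $X=\{x_0,x_1,x_2,x_3,x_4\}$, $Y=\{y_1,y_2,y_3,y_4,z,w_1,w_2,w_3\}$, let each $x_i$ with $1\le i\le 4$ be adjacent to exactly $y_1,y_2,y_3,y_4,z$, and let $x_0$ be adjacent to exactly $y_1,y_3,w_1,w_2,w_3$. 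Then $m=5$, $n=8$, every vertex of $X$ has degree $5=\max\{m,n/2+1\}$, and property (P) holds. For the cycle $C=x_1y_1x_2y_2x_3y_3x_4y_4x_1$ we get $T_i=(N(x_0)\cap N(x_i))\setminus V(C)=\{y_1,y_3\}\setminus V(C)=\emptyset$ for every $i$, so every arc move and every edge move fails, yet $x_0$ is adjacent to only two of the $y_j$; moreover its off-cycle neighbours $w_1,w_2,w_3$ have degree one and can never be ``spliced into'' any cycle, so your proposed rescue for the saturated case breaks down on the same example. A cycle through all of $X$ does exist here, e.g.\ $x_0y_3x_1y_2x_2y_4x_3zx_4y_1x_0$, but it is obtained by globally rerouting $C$ through chords and abandoning the $w_i$ entirely, not by any of your local operations. (In this graph $C$ is of course not coverage-maximal; but your derivation of the saturated form never invokes coverage-maximality --- it is pure local counting around $x_0$ --- so the derivation is unsound precisely where chords enter, and making maximality do global work is the missing content.)

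This is not a repairable detail but the actual substance of the theorem: what remains after your correct observations --- property (P), the reduction toward $2$-connectivity, and the count $d(x_0)\ge m>s$ producing an off-cycle neighbour --- is exactly the part that kept this statement a conjecture until Jackson's paper, whose proof is a long and delicate argument rather than an insertion/rotation scheme, and your final paragraph concedes as much (``making this last step quantitative \dots is the delicate part''). A secondary flaw: your ``WLOG $G$ is $2$-connected'' cleanup deletes degree-one vertices of $Y$, but removing a pendant neighbour of $x$ lowers $d(x)$ by $1$ while lowering the threshold $|Y|/2+1$ by only $1/2$ and leaving $|X|$ unchanged, so the degree hypothesis you use afterwards need not survive that reduction.
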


\subsection{Proof of Theorem \ref{Thm-BipartiteBondy}}
We first discuss on the extremal graphs of Theorem \ref{Thm-BipartiteBondy}.
Before the discussion, let us introduce some notations.

Let $a$, $b$, and $c$ be three positive integers. Let $\mathcal{B}_{a,b}$ be the set of bipartite
graphs with two partition sets of size $a$ and $b$, respectively. We define a graph $L_{a,b}^c$
as follows. (If the notation $c$ is unemphatic, we use $L_{a,b}$ instead.)

\noindent
$\bullet$ If $a\leq c$ or $b\leq c$, then let $L_{a,b}=K_{a,b}$.\\
$\bullet$ If $c<b\leq \max\{a,2c\}$, then let $L_{a,b}$ be the
graph by identifying one vertex from the $a$-set of $K_{a,c}$ and
the other one from the $1$-set of $K_{1,b-c}$.\\
$\bullet$ If $c< a\leq \max\{b,2c\}$, then let $L_{a,b}$ be the
graph by identifying one vertex from the $b$-set of $K_{b,c}$ and
the other vertex from the $1$-set of $K_{1,a-c}$.\\
$\bullet$ If $2c<\max\{a,b\}$, then let $L_{a,b}$ be the
graph by identifying one vertex from the $c$-set of $K_{c,b-c}$ and
the other vertex from the $(a-c+1)$-set of $K_{a-c+1,c}$.

By Theorem \ref{Thm-Jackson}, $L_{a,b}^c$ is a graph in $\mathcal{B}_{a,b}$ with the maximum number
of edges among those without cycles of length more than $2c$ (see Jackson \cite{J85}). One
can see the graph $L_{a,b}^c$ shows that the bounds in Theorem \ref{Thm-BipartiteBondy}
are tight for each case.

We define $$\varrho(a,b)=e(L_{a,b})-c^2.$$ Notice that if $C$ is a
longest cycle of a graph $G=L_{a,b}^c$, $c\leq b\leq a$, then
$\varrho(a,b)=\rho(G-C)$.

Armed with the necessary additional notations, let us restate
Theorem \ref{Thm-BipartiteBondy} as follows.

\smallskip
\noindent
{\bf Theorem $1.7^{'}$.} Let $G=(X,Y;E)$ be a bipartite graph,
where $|X|=a\geq b=|Y|$. Let $C$ be a longest cycle of
$G$ of length $2c$.
Then $\rho(G-C)\leq\varrho(a,b)$, i.e.,\\
(1) if $b\leq 2c$, then $\rho(G-C)\leq c(a-1-c)+b$; and\\
(2) if $b\geq 2c$, then $\rho(G-C)\leq c(a+b+1-3c)$.

Now we give a proof of Theorem $1.7^{'}$.

\smallskip
\noindent
{\bf Proof of Theorem $1.7^{'}$.}
We prove the theorem by contradiction. Let $G$ be a counterexample
to Theorem $1.7^{'}$ such that:\\
(i) $|G|$ is minimum; and\\
(ii) subject to (i), $e(G)$ is maximum.

\begin{claim}\label{Claim2-connected}
$G$ is 2-connected.
\end{claim}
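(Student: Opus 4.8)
The plan is to argue by contradiction and exploit the minimality of $|G|$ through an additive splitting of $\rho(G-C)$ across a separation of order at most one. So suppose $G$ is not $2$-connected. Then $G$ is either disconnected or has a cut-vertex, and in either case we may write $G=G_1\cup G_2$ with $V(G_1)\cap V(G_2)\subseteq\{v\}$ (the intersection being $\{v\}$ for a cut-vertex $v$, and empty when $G$ is disconnected), where each of $G_1,G_2$ has a vertex outside the other. Since a longest cycle is $2$-connected, $V(C)\setminus\{v\}$ lies in a single component of $G-v$; I would label things so that $C\subseteq G_1$ and $G_2\neq\{v\}$, which forces $|G_1|<|G|$.

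First I would record the additive identity
$$\rho(G-C)=\rho_{G_1}(G_1-C)+e(G_2).$$
Every edge of $G$ lies in exactly one of $G_1,G_2$ because their only possible common vertex $v$ is a cut-vertex; moreover each edge of $G_2$ is incident to a vertex of $V(G_2)\setminus\{v\}$, and this set is disjoint from $V(C)$ since $V(C)\subseteq V(G_1)$, so every edge of $G_2$ is counted in $\rho(G-C)$. The edges lying in $G_1$ contribute exactly $\rho_{G_1}(G_1-C)$, giving the identity.

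Next I would bound the two terms. Note that $C$ remains a longest cycle of $G_1$ and that each part of $G_1$ has at least $c$ vertices, since $C$ uses $c$ vertices from each side. As $|G_1|<|G|$ and $G$ has minimum order among counterexamples, Theorem~$1.7'$ holds for $G_1$, so $\rho_{G_1}(G_1-C)\le\varrho(a_1,b_1)=e(L_{a_1,b_1}^{c})-c^2$, where $a_1,b_1$ denote the two part-sizes of $G_1$. For the second term, $G_2$ is bipartite with no cycle of length exceeding $2c$, so by the extremal characterisation furnished by Theorem~\ref{Thm-Jackson} we have $e(G_2)\le e(L_{a_2,b_2}^{c})$, with $a_2,b_2$ the part-sizes of $G_2$. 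Finally I would invoke the superadditivity
$$e(L_{a_1,b_1}^{c})+e(L_{a_2,b_2}^{c})\le e(L_{a,b}^{c}),$$
proved by gluing a copy of $L_{a_1,b_1}^{c}$ to a copy of $L_{a_2,b_2}^{c}$ at one vertex of the part to which $v$ belongs (or taking the disjoint union when $G$ is disconnected): the resulting graph has part-sizes exactly $(a,b)$, since identifying $v$ reduces only its own part by one while the other part sums to its target, and it has no cycle longer than $2c$ because a single shared vertex cannot support a cycle meeting both pieces; hence its edge count $e(L_{a_1,b_1}^{c})+e(L_{a_2,b_2}^{c})$ is at most $e(L_{a,b}^{c})$ by the maximality of $L_{a,b}^{c}$. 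Combining the three estimates gives
$$\rho(G-C)\le e(L_{a_1,b_1}^{c})-c^2+e(L_{a_2,b_2}^{c})\le e(L_{a,b}^{c})-c^2=\varrho(a,b),$$
contradicting that $G$ is a counterexample, and so $G$ is $2$-connected.

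The step I expect to be the main obstacle is the bookkeeping at the shared vertex $v$: one has to track which bipartition class $v$ belongs to so that the glued graph really has classes of sizes $a$ and $b$, and one has to confirm that the (possibly unbalanced) pieces $G_1,G_2$ meet the hypotheses under which Theorem~$1.7'$ and the extremal characterisation apply—in particular that both part-sizes of $G_1$ are at least $c$, so that $\varrho(a_1,b_1)=e(L_{a_1,b_1}^{c})-c^2$ is the correct bound to feed in. The cycle-length and edge-count bounds themselves are then straightforward.
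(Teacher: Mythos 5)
Your proof is correct, and it takes a genuinely different route from the paper's. The paper also works with a minimal counterexample, but its argument needs all three extremal criteria: disconnectedness is excluded by adding cut-edges and invoking edge-maximality (ii), and the cut-vertex case introduces a further criterion (iii) (an end-block $B$ of minimum order) and then performs a chain of surgeries --- reattaching $B$ at a vertex $u_3\in V(C)$, replacing both the block and the remainder by copies of the extremal graphs $L$, and two further modifications $G_5,G_6$ --- to conclude that the counterexample would have to be exactly $L_{a,b}$ or $L_{b,a}$, for which $\rho(G-C)=\varrho(a,b)$, a contradiction. You use only order-minimality (i): the additive identity $\rho(G-C)=\rho_{G_1}(G_1-C)+e(G_2)$ across the separation, the induction hypothesis for $G_1$ (legitimate, since $C\subseteq G_1$ and $|G_1|<|G|$, and both parts of $G_1$ automatically have at least $c$ vertices), Jackson's extremal bound for $G_2$ (legitimate, since $G_2$ inherits the absence of cycles longer than $2c$; the bound is trivial when $\min\{a_2,b_2\}\le c$ and is Theorem~\ref{Thm-Jackson} otherwise, which is exactly the extremality of $L^c_{a_2,b_2}$ that the paper itself records before Theorem $1.7'$), and the superadditivity $e(L^c_{a_1,b_1})+e(L^c_{a_2,b_2})\le e(L^c_{a,b})$, which you correctly get for free by applying that same extremality to the one-point gluing (or disjoint union): its parts have sizes exactly $a$ and $b$, and every cycle in it lies inside a block of one of the two pieces. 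In effect, your superadditivity step replaces the paper's surgeries $G_4,G_5,G_6$ and its criteria (ii),(iii) entirely; both proofs rest on the same two pillars (minimality of the counterexample plus Jackson's extremal numbers), but yours isolates the one inequality that matters and is substantially shorter, at the mild cost of treating $\varrho$ and $e(L^c_{\cdot,\cdot})$ as symmetric functions of the two part sizes --- which they are, and which the paper's own usage also presumes.
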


\begin{proof}
If $G$ is disconnected, then let $G_1$ be a connected bipartite
graph obtained from $G$ by adding $\omega(G)-1$ edges such that each
new edge is between distinct partition sets, where $\omega(G)$
denotes the number of components in $G$. Since the added edges are
cut-edges of $G_1$, $C$ is a longest cycle of $G_1$ as well and all
the add edges are outside $C$. So $\rho_G(G-C)<\rho_{G_1}(G_1-C)$,
but $e(G_1)>e(G)$, a contradiction to (ii). This implies $G$ is
connected. Suppose now that $G$ has connectivity 1.
Let $B$ be an end-block of $G$ with smallest order
among those not containing $C$. We choose $G$
such that:\\
(iii) subject to (i),(ii), $|B|$ is minimum.

Let $u_0$ the cut-vertex of $G$ contained in $B$. Set
$$\theta_{X}=\left\{\begin{array}{ll}
  1,   &u_0\in X;\\
  0,   &u_0\in Y.
\end{array}\right. \mbox{ and~~~ } \theta_{Y}=\left\{\begin{array}{ll}
  0,    & u_0\in X;\\
  1,    & u_0\in Y.
\end{array}\right.$$

We first claim that $|X_B|\geq 2$ and $|Y_B|\geq 2$. Suppose not.
Since $B$ is bipartite and non-separable, we deduce $B\cong K_2$.
Set $V(B)=\{u_0,v_0\}$. So $v_0$ is of degree 1. Let $G_2:=G-v_0$.
By the choice of $G$, we get
$$\rho(G-C)=\rho_{G_2}(G_2-C)+1\leq\varrho(a-\theta_X,b-\theta_Y)+1\leq\varrho(a,b),$$
a contradiction. Thus, $|X_B|\geq 2$ and $|Y_B|\geq 2$.

Let $u_3\in V(C)$ such that $u,u_3$ are in the same partition set
(possibly $u_0=u_3$). Let
$$G_3:=(G-E(u_0,B-u_0))\cup\{u_3v: v\in V(B), u_0v\in E(G)\}.$$
Clearly, $C$ is a longest cycle of $G_3$ as well, and
$\rho(G-C)=\rho_{G_3}(G_3-C)$. Hence $G_3$ is also a counterexample
satisfying (i)(ii)(iii).

We use $B_3$ to denote the end-block of $G_3$ with the vertex set
$(V(B)\backslash\{u_0\})\cup \{u_3\}$. Set $D_3=G_3-(B_3-u_3)$. So
$G_3$ consists of $B_3$ and $D_3$. We have
$|X_{B_3}|+|X_{D_3}|=a+\theta_X$, and
$|Y_{B_3}|+|Y_{D_3}|=b+\theta_Y$. Since $D_3$ contains the cycle
$C$, $|X_{D_3}|\geq c$ and $|Y_{D_3}|\geq c$.

Let $B_4$ be a graph on $V(B_3)$ isomorphic to
$L_{|X_{B_3}|,|Y_{B_3}|}$ (recall the definition), $D_4$ a graph on $V(D_3)$ isomorphic
to $L_{|X_{D_3}|,|Y_{D_3}|}$, and $G_4$ the union of $B_4$ and
$D_4$. Clearly, the longest cycle of $D_4$ is of length $2c$. We
choose $D_4$ such that $C$ is a (longest) cycle in $D_4$ as well.
Also note that $B_3$ has no cycle of length more than $2c$. By
the choice of $G$,
$$\rho_{D_3}(D_3-C)\leq\varrho(|X_{D_3}|,|Y_{D_3}|)=\rho_{D_4}(D_4-C),$$
and furthermore, by Theorem \ref{Thm-Jackson},
$$e(B_3)\leq e(L_{|X_{B_3}|,|Y_{B_3}|})=e(B_4).$$
Thus, we have
$$\rho_{G_3}(G_3-C)=\rho_{D_3}(D_3-C)+e(B_3)\leq\rho_{D_4}(D_4-C)+e(B_4)=\rho_{G_4}(G_4-C).$$
Since $e(G_4[C])=c^2$ and $e(G_4)\geq e(G_3)$, we can see that $G_4$
is a counterexample satisfying (i)(ii)(iii).

If $B_4$ is separable, then we have a contradiction to (iii). So
$B_4$ is 2-connected, i.e., $B_4$ is an end-block of $G_4$. By the
definition of $B_4\cong L_{|X_{B_3}|,|Y_{B_3}|}$, we infer
$\min\{|X_{B_4}|,|Y_{B_4}|\}\leq c$ and $B_4\cong
K_{|X_{B_3}|,|Y_{B_3}|}$. Recall that $u_3$ is the cut-vertex of
$G_4$ contained in $B_4$. Let $u_4$ be a vertex in
$V(B_4)\backslash\{u_3\}$ with $d_{B_4}(u_4)\leq c$ (recall that
$|X_{B_4}|\geq 2$ and $|Y_{B_4}|\geq 2$). Set
$$\vartheta_x=\left\{\begin{array}{ll}
  1,  & u_4\in X;\\
  0,  & u_4\in Y.
\end{array}\right. \mbox{ and } \vartheta_y=\left\{\begin{array}{ll}
  0,    & u_4\in X;\\
  1,    & u_4\in Y.
\end{array}\right.$$

We will show that $\min\{|X_{D_4}|,|Y_{D_4}|\}=c$. Suppose that
$\min\{|X_{D_4}|,|Y_{D_4}|\}>c$. Without loss of generality, we assume
$|X_{D_4}|\geq|Y_{D_4}|$ (the other case can be dealt with
similarly). If $c<|Y_{D_4}|\leq 2c$, then $D_4$ has a pendent edge,
a contradiction to (iii). So $|Y_{D_4}|>2c$,
and $D_4$ consists of $A_x\cong K_{c,|Y_{D_4}|-c}$
and $A_y\cong K_{|X_{D_4}|+1-c,c}$, with a common vertex in
$X_{D_4}$. Let $G_5$ be the graph obtained from $G_4-E(u_4,B_4)$ by
adding edges from $u_4$ to all vertices in $X_{A_x}$ (if $u_4\in
Y_{B_4}$) or $Y_{A_y}$ (if $u_4\in X_{B_4}$). In each case, the
vertex $u_4$ is of degree $c$ in $G_5$. That is, $G_5=B_5\cup
D_5$ where
$$B_5=L_{|X_{B_4}|-\vartheta_x,|Y_{B_4}|-\vartheta_y},
D_5=L_{|X_{D_4}|+\vartheta_x,|Y_{D_4}|+\vartheta_y}.$$ Obviously, $C$
is a longest cycle of $G_5$ as well, and $\rho_{G_4}(G_4-C)\leq\rho_{G_5}(G_5-C)$.
Note that the end-block $B_5$ of $G_5$ has order less than $B$, a
contradiction to (iii). Thus, $\min\{|X_{D_4}|,|Y_{D_4}|\}=c$.

By symmetry, we assume that $|X_{D_4}|=c$. If $|X_{B_4}|\leq c$,
then we can get a contradiction
similarly as above. So assume that $|X_{B_4}|>c$, which implies
that $|Y_{B_4}|\leq c$, since $\min\{|X_{B_4}|,|Y_{B_4}|\}\leq c$.

We claim that $|Y_{B_4}|=c$. Suppose that $|Y_{B_4}|<c$. If
$|Y_{D_4}|=c$ as well, then we can get a contradiction similarly as
above. So $|Y_{D_4}|>c$. Let $u_6$ be a vertex in
$Y_{D_4-C}$, and let $G_6$ be the graph obtained from
$G_4-E(u_6,D_4)$ by adding edges from $u_6$ to all vertices in
$X_{B_4}$. Clearly $e(G_4)<e(G_6)$. One can see that
$G_6$ has no cycle of length more than $2c$. Thus, $C$ is a longest
cycle of $G_6$ as well, and $\rho_{G_4}(G_4-C)\leq\rho_{G_6}(G_6-C)$,
a contradiction to (ii). Hence we conclude $|Y_{B_4}|=c$.

This implies that $G_4$ is isomorphic to $L_{a,b}$ or $L_{b,a}$. In
any case, we have $\rho_{G_4}(G_4-C)=\varrho(a,b)$, a contradiction.
This proves Claim \ref{Claim2-connected}.
\end{proof}

Next we distinguish the following cases and derive a contradiction for each one.

\begin{case}
$2c<b\leq a$.
\end{case}

For this case, $\varrho(a,b)=\varrho(a-1,b)+c=\varrho(a,b-1)+c$. If
there is a vertex in $G-C$ with degree at most $c$, then by the
choice of $G$, $\rho_{G}(G-C)\leq\varrho(a,b)$. Thus, we assume that
every vertex in $G-C$ has degree at least $c+1$. Let $H$ be a
component of $G-C$. By Lemma \ref{LemFYZ}, for each vertex $v\in H$,
there is an $(v,C)$-fan $F$ with $e(F)\geq c+1$. This implies
$|V(C)|\geq 2c+2$, a contradiction.

\begin{case}
$c\leq b\leq 2c<a$.
\end{case}

For this case, $\varrho(a,b)=\varrho(a-1,b)+c$. If there is a vertex
in $X_{G-C}$ with degree at most $c$, then we can get a
contradiction by the choice of $G$. So assume that every vertex in
$X_{G-C}$ has degree at least $c+1$. Let $X'\subseteq X_{G-C}$ with
$|X'|=c+1$, and $G'=G[X'\cup Y]$. Observe that for every $x\in X'$,
$d_{G'}(x)\geq c+1=\max\{|X'|,|Y|/2+1\}$. By Lemma
\ref{Lem_Cyclecontaining}, $G'$ has a cycle containing all vertices
in $X'$, i.e., $G$ has a cycle of length $2c+2$, a contradiction.

\begin{case}
  $c\leq b<a\leq 2c$.
\end{case}

For this case, $\varrho(a,b)=\varrho(a-1,b)+c$. So every vertex in
$X_{G-C}$ has degree at least $c+1$. Since $a>b$, we can choose $H$
as a component of $G-C$ with $|X_H|>|Y_H|$. Let
$N_C(H)=\{u_1,u_2,\cdots,u_\alpha\}$, where $u_1,\cdots,u_\alpha$
appear in this order along $C$. Let $v_1\in N_H(u_1)$. By Lemma
\ref{Lem-specivertexpath}, $H$ has a maximal $v_1$-path $P$ with
terminus in $X_H$. Let $s\in X_H$ be the terminus of $P_1$. We
extend the path $sP_1v_1u_1\overleftarrow{C}[u_1,u_2]$ to be a
maximal $t$-path, say $P$. Thus $P$ is a maximal path of $G$. Let
$t$ be the end-vertex of $P$ other than $s$. Since $d(s)\geq c+1$,
we have $|V(P)|\geq 2c+2$ and $d(s)+d(t)\geq c+3$. By Lemma
\ref{Lem-Jacksonmaximalpath}, $G$ has a cycle of length more than
$2c$, a contradiction.

\begin{case}
$c\leq b=a\leq 2c$.
\end{case}

For this case, $\varrho(a,b)=\varrho(a-1,b-1)+c+1$. If there is a
pair of vertices $(x,y)\in(X_{G-C},Y_{G-C})$ with $\rho(x,y)\leq
c+1$, then we can prove by the choice of $G$. So assume that for
every $(x,y)\in(X_{G-C},Y_{G-C})$, $\rho(x,y)\geq c+2$. Recall that
$$\rho(x,y)=\left\{\begin{array}{ll}
  d(x)+d(y),    & xy\notin E(G);\\
  d(x)+d(y)-1,  & xy\in E(G).
\end{array}\right.$$

\begin{subcase}
  $G-C$ is disconnected.
\end{subcase}

First assume that there is a balanced component $H$ of $G-C$. Then
both $G_1=G[V(C)\cup V(H)]$ and $G_2=G-H$ are balanced. Clearly, $C$
is a longest cycle of $G_1$ and $G_2$ as well. Since $|G_1|<|G|$, we
get
$$\rho_{G_1}(G_1-C)\leq c(|X_{G_1}|-1-c)+|Y_{G_1}|=(c+1)|X_H|,$$ and similarly,
$$\rho_{G_2}(G_2-C)\leq(c+1)|X_{G-C-H}|.$$ Thus
$$\rho(G-C)=\rho(H)+\rho(G-C-H)=\rho_{G_1}(G_1-C)+\rho_{G_2}(G_2-C)\leq(c+1)|X_{G-C}|=\varrho(a,b).$$

Now we assume that every component of $G-C$ is not balanced. Let
$H_1, H_2$ be two components of $G-C$ such that
$|X_{H_1}|>|Y_{H_1}|$, $|X_{H_2}|<|Y_{H_2}|$. Let $N_C(H_1)\cup
N_C(H_1)=\{u_1,u_2,\ldots,u_\alpha\}$, where the vertices appear in
this order along $C$. Set $\beta_1=\min\{d_{H_1}(x): x\in X_{H_1}\}$
and $\beta_2=\min\{d_{H_2}(y): y\in Y_{H_2}\}$. Thus
$\alpha+\beta_1+\beta_2\geq c+2$.

Without loss of generality, we suppose $u_1\in N_C(H_1)$ and $u_2\in
N_C(H_2)$. Let $v_1\in N_{H_1}(u_1)$, $v_2\in N_{H_2}(u_2)$. By
Lemma \ref{Lem-specivertexpath}, $H_1$ has a maximal $v_1$-path
$P_1$ with terminus in $X_{H_1}$ and of order at least $2\beta_1$,
$H_2$ has a maximal $v_2$-path $P_2$ with terminus in $Y_{H_2}$ and
of order at least $2\beta_2$. Thus,
$P=P_1v_1\overleftarrow{C}[u_1,u_2]v_2P_2$ is a maximal path of $G$.
Let $s,t$ be the ends of $P$. Recall that $d(s)+d(t)\geq c+2$. If
$|V(P)|\geq 2c+1$, then by Lemma \ref{Lem-Jacksonmaximalpath}, $G$
has a cycle of length more than $2c$, a contradiction. So assume
that $|V(P)|\leq 2c$. This implies that
$\ell(\overrightarrow{C}[u_1,u_2])\geq 2$. Therefore, we infer
$\ell(\overrightarrow{C}[u_i,u_{i+1}])\geq 2$ for all
$i=1,2,\cdots,\alpha$ (the index are taken modular $\alpha$).
Moreover,
$$2c\geq|V(P_1)|+|V(P_2)|+\ell(\overleftarrow{C}[u_1,u_2])+1\geq
2\beta_1+2\beta_2+2c+1-\ell(\overrightarrow{C}[u_1,u_2]).$$ Thus we
have $\ell(\overrightarrow{C}[u_1,u_2])\geq 2\beta_1+2\beta_2+1$. So
$$\ell(C)=\sum_{i=1}^\alpha\ell(\overrightarrow{C}[u_i,u_{i+1}])\geq
2\beta_1+2\beta_2+1+2(\alpha-1)>2c,$$ a contradiction.

\begin{subcase}
  $G-C$ is connected.
\end{subcase}

Let $H=G-C$, $N_C(H)=\{u_1,u_2,\ldots,u_\alpha\}$, and
$\beta=\min\{\rho_H(x,y): x\in X_H, y\in Y_H\}$. So
$\alpha+\beta\geq c+2$. Note that $H$ is balanced.

\begin{subsubcase}
$N_C(H)\cap X_C\neq\emptyset$ and $N_C(H)\cap Y_C\neq\emptyset$.
\end{subsubcase}

Without loss of generality, we assume that $u_1\in X_C$, $u_2\in
Y_C$. Let $v_1\in N_H(u_1)$, $v_2\in N_H(u_2)$. By Lemma
\ref{Lem-goodmaximalpair}, $H$ has a detached maximal
$(v_1,v_2)$-DPP $D$ with order at least $\beta+1$. Thus $P=D\cup
v_1u_1\overleftarrow{C}u_2v_2$ is a maximal path of $G$. Let $s,t$
be the two end-vertices of $P$. So $d(s)+d(t)\geq c+2$. If
$|V(P)|\geq 2c+1$, then by Lemma \ref{Lem-Jacksonmaximalpath}, $G$
has a cycle of length more than $2c$, a contradiction. So we derive
$|V(P)|\leq 2c$. Therefore,
$$2c\geq|V(D)|+\ell(\overleftarrow{C}[u_1,u_2])+1\geq
\beta+1+2c+1-\ell(\overrightarrow{C}[u_1,u_2]).$$ This implies
$\ell(\overrightarrow{C}[u_1,u_2])\geq \beta+2$.

Note that there also exists some subscript $i$ such that $u_i\in
Y_C$ and $u_{i+1}\in X_C$. We can get
$\ell(\overrightarrow{C}[u_i,u_{i+1}])\geq\beta+2$ as above. So
$$\ell(C)=\sum_{i=1}^\alpha\ell(C[u_i,u_{i+1}])\geq
2(\beta+2)+2(\alpha-2)>2c,$$ a contradiction.

\begin{subsubcase}
$N_C(H)\cap X_C=\emptyset$ or $N_C(H)\cap Y_C=\emptyset$.
\end{subsubcase}

Without loss of generality, we assume that $N_C(H)\subseteq Y_C$.
Recall that $N_C(H)=\{u_1,u_2,\ldots,u_\alpha\}$, and
$\beta=\min\{\rho_H(x,y): x\in X_H, y\in Y_H\}$. Since $G$ is
2-connected, $\alpha\geq 2$ and $|N_H(C)|\geq 2$. Specially,
$|X_H|=|Y_H|\geq 2$.

First assume that $H$ is 2-connected. Then at least two of $u_i$ have
the property that $u_i,u_{i+1}$ are adjacent to two distinct
vertices in $H$. If $u_i,u_{i+1}$ are adjacent to two distinct
vertices in $H$, then by Lemma \ref{LePathOrder}, there is a
$(u_i,u_{i+1})$-path of length at least $\beta+1$ with all internal
vertices in $H$. It follows that
$\ell(\overrightarrow{C}[u_i,u_{i+1}])\geq\beta+1$. Recall that
there are at least two pairs $\{u_i,u_{i+1}\}$ that have two
distinct neighbors in $H$. Thus
$$\ell(C)=\sum_{i=1}^\alpha\ell(C[u_i,u_{i+1}])\geq
2(\beta+1)+2(\alpha-2)>2c,$$ a contradiction.

Now we assume that $H$ has connectivity 1. Then at least two of
$u_i$ have the property that $u_i,u_{i+1}$ are adjacent to a good
pair of $H$. If $u_i,u_{i+1}$ are adjacent to a good pair of $H$,
say $\{x_i,x_{i+1}\}$, then by Lemma \ref{LeGoodPair}, $H$ has a
detached maximal $\{x_i,x_{i+1}\}$-DPP $D$ of order at least
$\beta+1$. Let $s,t$ be the termini of the two paths of $D$. It
follows that $P=D\cup(x_1u_1\overleftarrow{C}u_2x_2)$ is a maximal
path of $G$. If $|V(P)|\geq 2c+1$, then $G$ has a cycle of length
more than $2c$ by Lemma \ref{Lem-Jacksonmaximalpath}. So assume that
$|V(P)|\leq 2c$. That is,
$$2c\geq|V(P)|\geq\beta+1+\ell(\overleftarrow{C}[u_i,u_{i+1}])+1=\beta+2c+2-\ell(\overrightarrow{C}[u_i,u_{i+1}]).$$
This implies $\ell(\overrightarrow{C}[u_i,u_{i+1}])\geq\beta+2$.

Recall that there are at least two pairs $\{u_i,u_{i+1}\}$ that
are adjacent to a good pair of $H$. It follows that
$$\ell(C)=\sum_{i=1}^\alpha\ell(C[u_i,u_{i+1}])\geq
2(\beta+2)+2(\alpha-2)>2c,$$ a contradiction. The proof is complete.
\qed

\section{Proofs of Theorems \ref{Thm-general} and \ref{Thm-balanced}}
In this section, we prove Theorem \ref{Thm-balanced}
and then prove Theorem \ref{Thm-general}.
We first give a sketch of the proof of Theorem \ref{Thm-balanced}.
We shall show that $G$ is weakly bipancyclic with girth 4 and prove
it by contradiction. Suppose not. Then for a longest cycle $C$,
the hamiltonian graph $G[C]$ is not bipancyclic. By a theorem of
Entringer and Schmeichel \cite{ES88}, we can get an upper bound
of $e(G[C])$. Notice that Theorem \ref{Thm-BipartiteBondy} gives
an upper bound of $e(G-C)+e(G-C,C)$. Finally, this can give us an
estimate of an upper bound of $e(G)$, which contradicts the edge
number condition.

Proof of Theorem \ref{Thm-balanced} needs the following.

\begin{theoremA}[Entringer and Schmeichel \cite{ES88}]\label{ThES}
Let G be a hamiltonian bipartite graph of order $2n\geq 8$.
If $e(G)>\frac{n^2}{2}$, then $G$ is bipancyclic.
\end{theoremA}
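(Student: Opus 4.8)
The plan is to use the Hamilton cycle as a scaffold and argue by contradiction on the chords (non-cycle edges) of that cycle. Write a fixed Hamilton cycle as $C=v_1v_2\cdots v_{2n}v_1$, with the vertices alternating between the two colour classes, so that $v_i$ and $v_j$ lie in distinct classes exactly when $i-j$ is odd. The length $2n$ is realised by $C$ itself, so it suffices to produce cycles of every even length $2k$ with $2\le k\le n-1$. Assume for contradiction that some such length $2k$ is not realised; the goal is to deduce $e(G)\le n^2/2$, contradicting the hypothesis $e(G)>n^2/2$.

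The first ingredient is the chord--cycle correspondence: a chord joining $v_i$ and $v_j$ splits $C$ into two arcs of lengths $|i-j|$ and $2n-|i-j|$, producing cycles of lengths $|i-j|+1$ and $2n-|i-j|+1$. Since $G$ is bipartite, $|i-j|$ is odd and both these lengths are even. Consequently, the absence of a $2k$-cycle immediately forbids every chord at arc-distance $2k-1$ and at arc-distance $2n-2k+1$. The second ingredient is a rotation (nesting) lemma: if $v_av_b$ is a chord whose short side has length $L$ and $v_{a+1}v_{b-1}$ is also an edge, then the nested chord yields a cycle of length $L-2$; more generally, suitably crossing chords let one shift a realised cycle length by $2$. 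These two facts turn ``$2k$ is missing'' into a cascade of forbidden chord configurations rather than a single forbidden distance.

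The main step, and the step I expect to be the principal obstacle, is converting this local forbidden structure into a global edge deficiency. Here I would adapt the Entringer--Schmeichel counting: for each index $i$, compare the neighbourhoods of $v_i$ and of the shifted vertex $v_{i+2k-1}$, and use the rotation lemma together with the missing length $2k$ to show these neighbourhoods cannot overlap too much, yielding a bound of the form $d(v_i)+d(v_{i+2k-1})\le 2n$. Summing such inequalities over all $i$ (so that each vertex is counted the correct number of times) telescopes to $2e(G)=\sum_v d(v)\le n^2$, i.e.\ $e(G)\le n^2/2$, the desired contradiction. The delicate part is making the crossing/disjointness lemma precise enough that the degree sum matches the sharp threshold exactly, with no double-counting of chords and with the parity bookkeeping (which arc carries the even versus odd length) kept consistent throughout.

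Finally, I would treat the extreme values of $k$ separately, since the shifting argument degenerates there: the case $k=2$ (existence of a $4$-cycle) and the near-Hamiltonian case $k=n-1$ each reduce to a short direct argument, and the hypothesis $2n\ge 8$ (so $n\ge 4$) keeps the number of such boundary cases finite. Once the counting lemma is in place, these boundary checks are routine, and the contradiction established in the main step completes the proof that $G$ is bipancyclic.
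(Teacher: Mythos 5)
First, a point of context: the paper does not prove this statement at all. It is quoted as Theorem~A directly from Entringer and Schmeichel \cite{ES88} and used as a black box in the proof of Theorem~\ref{Thm-balanced}. So there is no proof in the paper to compare yours against; what follows is a review of your proposal on its own merits, measured against the known Bondy/Schmeichel-style technique that this theorem rests on.

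Your scaffolding is the right one (fix a Hamilton cycle, suppose a length $2k$ is missing, exploit the chord--arc correspondence, and count via shifted neighbourhoods), but the pivotal counting step fails as stated, in two ways. First, the inequality $d(v_i)+d(v_{i+2k-1})\le 2n$ is trivially true in \emph{every} bipartite graph with parts of size $n$, since each degree is at most $n$; it therefore encodes nothing about the absence of $C_{2k}$ and can produce no contradiction. Second, the arithmetic does not close: summing a per-pair bound of $2n$ so that each vertex is counted once gives $\sum_v d(v)\le 2n^2$, i.e.\ $e(G)\le n^2$, a factor of $2$ short of the required $e(G)\le n^2/2$. What the argument genuinely needs is a per-pair bound of $n$, and the natural pairing for that is not $(v_i,v_{i+2k-1})$ but a pair of $C$-consecutive vertices $(v_i,v_{i+1})$, which lie in opposite colour classes: if $G$ has no $C_{2k}$, then $v_iv_j\in E$ and $v_{i+1}v_{j+2k-3}\in E$ cannot both hold, because the two chords, the arc $v_jv_{j+1}\cdots v_{j+2k-3}$, and the edge $v_{i+1}v_i$ together form a cycle of length $1+(2k-3)+1+1=2k$ (provided the arc avoids $v_i$ and $v_{i+1}$). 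Hence $N(v_i)$ and the $(2k-3)$-shift of $N(v_{i+1})$ are essentially disjoint subsets of the $n$ cycle-positions of a single parity, forcing $d(v_i)+d(v_{i+1})\le n$ up to wrap-around exceptions; summing over $i$ then yields $4e(G)\le 2n\cdot n$, i.e.\ $e(G)\le n^2/2$. Proving this disjointness and taming the exceptional indices so that the threshold comes out at exactly $n^2/2$ rather than $n^2/2+O(n)$ is the entire substance of the Entringer--Schmeichel proof, and it is exactly the part your proposal defers to ``adapting the counting.'' (A smaller slip: a nested chord $v_{a+1}v_{b-1}$ inside a chord whose short-arc cycle has even length $\ell$ gives a cycle of length $\ell-2$, not ``$L-2$'' where $L$ is the arc length; as written your lengths have the wrong parity.) So the proposal is a correct skeleton with its load-bearing lemma missing, stated in a form that is simultaneously trivial and too weak.
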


Now we are in stand for giving the proof of Theorem \ref{Thm-balanced}.

\noindent
\smallskip
{\bf Proof of Theorem \ref{Thm-balanced}.} (i) Set $n=m$ and $t=n-k$
in Theorem \ref{Thm-Jackson}. Since $n\geq 2k+2$, we have $n\leq
2t-2$. By computation, we get $e(G)\geq n(n-k-1)+k+2>n+(n-1)(t-1)$.
By Theorem \ref{Thm-Jackson}, $c(G)\geq 2t=2(n-k)$. The proof is
complete.

\noindent
(ii) Suppose that $G$ is not weakly bipancyclic
with girth 4. Let $C$ be a longest cycle in $G$, $2c$ the length of $C$ in $G$,
and $G'=G[C]$. Notice that $G'$ is hamiltonian. Since $G$ is not weakly bipancyclic
with girth 4, $G'$ is not bipancyclic. By Theorem \ref{ThES}, we have $e(G')\leq \frac{c^2-1}{2}$.
By Theorem \ref{Thm-balanced}(i), we know $c\geq n-k\geq k+2$ where $k\in \mathbb{Z}$, and $2c-n\geq n-2k>0$.
By Theorem \ref{Thm-BipartiteBondy},
\begin{align*}
e(G)&=e(G')+e(G-C,C)+e(G-C)\leq -\frac{c^2}{2}+c(n-1)+n-\frac{1}{2}.
\end{align*}
Recall that $n-k\leq c\leq n$. Set a function $f(x)=-\frac{x^2}{2}+x(n-1)+n-\frac{1}{2}$,
where $n-k\leq x\leq n$.
The symmetric axis is $x=n-1$.
First, suppose that $k=0$ or $k\geq 2$. Notice that
$n-(n-1)\leq (n-1)-(n-k)$. There holds
\begin{align*}
f(x)\leq f(n)=\max\{f(n-k),f(n)\}=\frac{n^2}{2}-\frac{1}{2}<n(n-k-1)+k+2,
\end{align*}
since $n\geq 2(k+1)$, a contradiction. Let $k=1$. Then
$f(n-1)=\frac{(n-1)^2}{2}+n-\frac{1}{2}<n(n-2)+3$, a contradiction.
The proof is complete. \qed

\smallskip
Using Theorem \ref{Thm-balanced}, we can prove Theorem \ref{Thm-general}
now.

\smallskip
\noindent
{\bf Proof of Theorem \ref{Thm-general}.}
We prove the theorem by contradiction. Let $G$ be a counterexample such that
the order $n+m$ is smallest, and then the number of edges is smallest.
By Theorem \ref{Thm-balanced}, we know $n\geq m+1$. If there is a vertex,
say $y\in Y$, such that $d_X(y)\leq m-k-1$, then let $G':=G-y$.
We can see $e(G')=e(G)-d_X(y)\geq (n-1)(m-k-1)+k+2$. By the choice of $G$,
$G'$ is not a counterexample, and so is $G$, a contradiction.
Thus, $d_X(y)\geq m-k$ for each vertex $y\in Y$. Hence $e(G)\geq n(m-k)>(n-1)(m-k-1)+k-2$,
since $m\geq 2k+2$ and $n\geq 2$. Now we can delete an edge in $G$ and
find a smaller counterexample, a contradiction. This contradiction completes
the proof.
\qed

\section*{Acknowledgements}
Part of writing was done when the second author
was visiting Fengming Dong at NTU, Singapore.
The second author is grateful to all his help, enthusiasm,
and encouragement during his visit. The second author is also
grateful to Jie Ma for sharing his knowledge on
this topic.

\end{document}